\documentclass[twoside]{amsart}
\usepackage{amssymb,amsmath,amscd}
\usepackage{amsthm}
\usepackage{amsfonts}
\usepackage{enumerate}
\usepackage{amsbsy}
\usepackage{geometry}
\usepackage[utf8]{inputenc}
\usepackage[T1]{fontenc}

\usepackage{microtype}

\newtheorem{theorem}{Theorem}[section]

\newtheorem{definition}[theorem]{Definition}

\newtheorem{example}[theorem]{Example}

\numberwithin{equation}{section}
\usepackage{caption} 
\usepackage{subcaption} 
\usepackage{graphicx}
\usepackage{pgfplots}
\usepackage[all]{nowidow}
\usepackage[utf8]{inputenc}
\usepackage{tikz}
\usepackage[inline]{enumitem}
\usepackage{blindtext}
\usepackage{amssymb}		
\usepackage{ulem}
\usepackage{amsfonts}

\usepackage[hidelinks,colorlinks=true,linkcolor=blue,citecolor=red, urlcolor=blue]{hyperref}
\numberwithin{equation}{section}
\pgfplotsset{compat=1.18}

\begin{document}
\title[Riesz Representation Theorems and Probability]{From Riesz to Kakutani: Representation Theorems and the Analytical Foundations of Probability}

\author[Antonio Cedeño-Pérez]{Luis Antonio Cedeño-Pérez}
\address{Departamento de Matemáticas, Facultad de Ciencias, Universidad Nacional Autónoma de México, Mexico City, Mexico}
\curraddr{}
\email{luisacp@ciencias.unam.mx}
\thanks{}

\author[Hugo Reyna-Castañeda]{Hugo Guadalupe Reyna-Castañeda}
\address{Departamento de Matemáticas, Facultad de Ciencias, Universidad Nacional Autónoma de México, Mexico City, Mexico}
\curraddr{}
\email{hugoreyna46@ciencias.unam.mx}
\thanks{}

\author[Mara Sandoval-Romero]{Mar\'ia de los \'Angeles Sandoval-Romero}
\address{Departamento de Matemáticas, Facultad de Ciencias, Universidad Nacional Autónoma de México, Mexico City, Mexico}
\curraddr{}
\email{selegna@ciencias.unam.mx}
\thanks{}

\begin{abstract}
  The analytical foundations of modern probability trace back to a sequence of representation theorems that reshaped functional analysis in the twentieth century. From Fréchet’s identification of linear functionals with vectors in Hilbert spaces to Kakutani’s characterization of measures on spaces of continuous functions, each theorem reveals how linearity, duality, and measure intertwine. Following this historical and conceptual path --—from Fréchet–Riesz to Riesz–Stieltjes, from $L^p$ duality to Riesz–Markov–Kakutani—-- we show that expectation, distribution, conditional expectation, and the Wiener measure are analytic manifestations of a single principle of representation. Viewed through this lens, probability theory appears not merely as an extension of measure theory, but as the geometric realization of functional analysis itself: every probabilistic notion embodies an existence-and-uniqueness principle in a space of functions.
\end{abstract}

\maketitle

\tableofcontents

\setlength{\parskip}{0.25em} 
\section{Introduction}
\sloppy

Modern probability theory rests largely upon the principles of functional analysis; cf. \cite{Gorostiza2001}. The idea that random phenomena can be described through linear objects---functionals, operators, or measures---finds its foundation in the representation theorems, which establish a deep correspondence between the structure of function spaces and the measures acting on them. In this sense, probability can be viewed as a natural extension of the notions of duality, continuity, and projection developed within the framework of twentieth-century functional analysis (see, for example, \cite{Dieudonne1981}).

Among the results that embody this connection are the representation theorems of Fréchet--Riesz, Riesz--Stieltjes, Riesz in $L^{p}$, and Riesz--Markov--Kakutani. Each of them reveals a different facet of a single unifying principle: the identification of every continuous linear functional with a measurable object ---whether a vector, a function, or a measure--- that represents it uniquely.

In this article we adopt this principle as the central viewpoint and show that several fundamental notions of probability ---expectation, the distribution of a random variable, conditional expectation, and the Wiener measure--- admit a unified analytical interpretation. Our contribution is to make explicit that each of these constructions arises as the solution of a linear representation problem naturally posed on an appropriate space of functions. Rather than treating these notions as isolated or purely probabilistic, we reveal that their existence and uniqueness follow directly from the Riesz representation theorems once the correct functional-analytic framework is identified. Although this perspective is implicitly present across the classical literature, it has not been articulated systematically: here we isolate the underlying linear functional behind each probabilistic concept and apply the corresponding Riesz theorem as the mechanism that produces it. This approach highlights the functional-analytic essence of these notions and places them within the historical development of the representation theorems of Fréchet, Riesz, Markov, and Kakutani.

From this viewpoint, the representation theorems are not merely technical instruments but constitute the conceptual architecture that supports the analytical formulation of modern probability. The journey from Riesz to Kakutani thus traces a line of continuity between functional analysis and measure theory, revealing that behind every essential probabilistic notion lies, silently, a statement of existence and uniqueness within a space of functions.

In the next section, we present a brief historical overview of the representation theorems of Fréchet--Riesz, Riesz--Stieltjes, Riesz in $L^{p}$, and Riesz--Markov--Kakutani. This review is not merely contextual: all these results are rooted in a simple geometric intuition from Euclidean spaces whose development, when extended to infinite-dimensional and topological settings, reveals the conceptual continuity that guides the theory. Observing this evolution shows how an elementary idea ultimately shapes the analytical architecture of modern probability.

\section{From Fréchet--Riesz to Kakutani: The Birth of the Representation Principle}

The history of representation theorems arises from a fundamental geometric intuition of linear algebra. In a Euclidean space $\mathbb{R}^N$, every linear functional $L:\mathbb{R}^N \to \mathbb{R}$ can be written as an inner product:
$$
L(x)=v_1x_1+\cdots+v_Nx_N=\langle v,x\rangle_{\mathbb{R}^N}=v^{\top}x.
$$

This familiar expression from introductory mathematics contains a deep idea: \textit{every linear functional can be represented by a vector} that acts as its direction of projection.

The row vector $v^{\top}$ stands for the functional, and the action of $L$ on $x$ simply measures how aligned they are (see Fig. \ref{figura-proyeccionRN}).

\begin{figure}[!ht]
\centering
\begin{tikzpicture}[scale=1.125]
\draw[-, semithick] (-0.5,0) -- (2.2,0);
\draw[-, semithick] (0,-0.5) -- (0,2.2);
\draw[-, thick] (0,0)--(1.55,1.55) node[left]{$v$};
\draw[-, thick] (0,0)--(1.45,0.45);
\draw[-, dashed, semithick] (0,0)--(-0.5,-0.5);
\draw (1.35,0.415) node[below]{$x$};
\draw[-, dashed, semithick] (1.45,0.45)--(1,1);
\end{tikzpicture}
\caption{The value of $L(x)$ measures the projection of the vector $x$ onto the direction of $v$.}
\label{figura-proyeccionRN}
\end{figure}
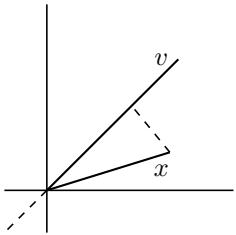

At the beginning of the twentieth century, analysis was undergoing a decisive transformation. The advances of Cauchy, Weierstrass, and Lebesgue had prepared the ground for the emergence of abstract structures: metric, normed, and topological spaces; cf. \cite{Bombal2003,Dieudonne1981}.

Within this new language, mathematicians began to ask whether the correspondence between functionals and vectors could persist beyond the finite-dimensional world. In other words: \textit{is it still true that every continuous functional can be seen as a projection in some direction when the space has infinitely many dimensions?}

The answer emerged gradually, through a chain of discoveries that outlined the birth of modern functional analysis and what we now call the Riesz representation theorems.

\begin{figure}[!ht]
\centering
\includegraphics[scale=0.125]{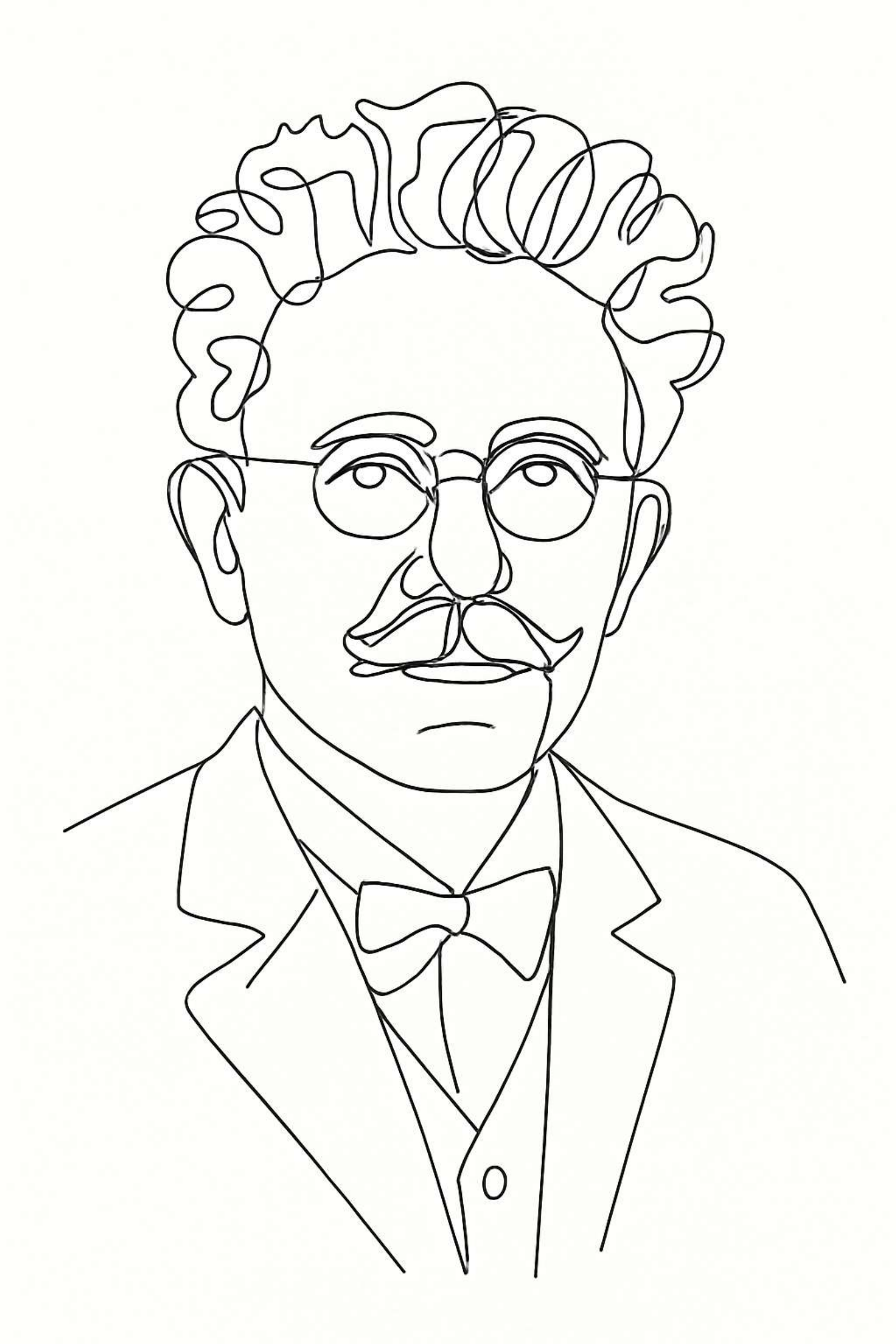}
\caption{Maurice René Fréchet (1878--1973).}
\label{figura-Frechet}
\end{figure}

The first step was taken by Maurice René Fréchet (1878--1973) (see Fig. \ref{figura-Frechet}). In his 1906 thesis \textit{Sur quelques points du calcul fonctionnel}; cf. \cite{Frechet1906}, he introduced the notions of a metric space and a functional depending on functions, establishing the conceptual framework that would extend the ideas of linear algebra to the realm of functions.

Shortly afterward, Frigyes Riesz (1880--1956) (see Fig. \ref{figura-Riesz}) gave this intuition its most precise mathematical form. In 1907, in his celebrated paper \textit{Sur les opérations fonctionnelles linéaires}; cf. \cite{Riesz1907}, he proved that every continuous linear functional on a Hilbert space $\mathcal{H}$ can be represented uniquely as an inner product with a vector from the same space. This result, now known as the Fréchet--Riesz theorem, preserved the geometric intuition of the finite-dimensional case: applying a functional is equivalent to projecting onto a direction in the space.

Thus was born the representation principle ---the idea that every linear operation can be ``embodied'' in a geometric object.

\begin{figure}[!ht]
\centering
\includegraphics[scale=0.125]{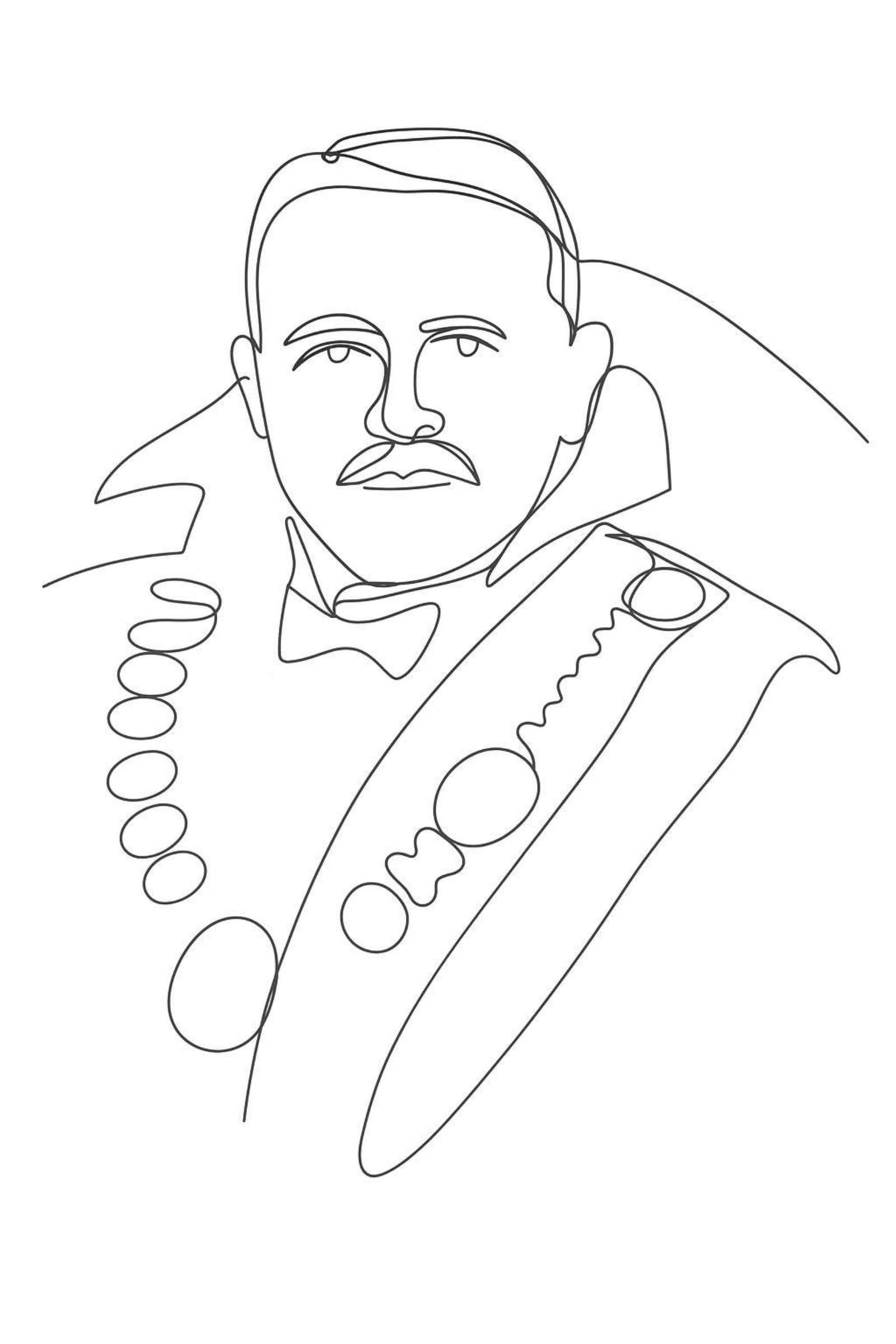}
\caption{Frigyes Riesz (1880--1956).}
\label{figura-Riesz}
\end{figure}

Riesz continued to extend his result. In 1909, in his long paper \textit{Sur les opérations fonctionnelles linéaires}; cf. \cite{Riesz1909}, he generalized the principle to the space $\mathcal{C}([a,b])$ of continuous functions on a compact interval, showing that every continuous linear functional can be represented by a Riemann--Stieltjes integral with respect to a function of bounded variation.

This Riesz--Stieltjes theorem built the first bridge between classical integral calculus and the emerging theory of measure: the coordinate vector was replaced by a function of bounded variation, and the finite sum by an integral.

Between 1910 and 1916, Riesz went a step further and formulated the duality of the $L^p$ spaces. In his 1916 paper \textit{Über lineare Funktionalgleichungen}; cf. \cite{Riesz1916}, he showed that every continuous linear functional on $L^p(a,b)$ can be represented as an integral against a function in $L^q(a,b)$, with $\frac{1}{p}+\frac{1}{q}= 1$. This celebrated $L^p$--$L^q$ duality became one of the pillars of modern analysis, showing that a linear functional can be interpreted as a \textit{generalized projection} mediated by an integrable function instead of a vector.

By the mid-1930s, the representation principle had reached full maturity. In 1937, Riesz published \textit{Sur les fonctions continues}; cf. \cite{Riesz1937}, proving that every positive and continuous linear functional on $\mathcal{C}(K)$, where $K$ is compact, corresponds uniquely to a regular Borel measure.

This gave a tangible meaning to abstract functionals: linear operations on spaces of functions could be understood as measures on their domain. The geometry of projections had evolved into the geometry of measures.

The next step came from the Russian school of analysis and probability. In 1938, Andrei Andreevich Markov Jr. (1903--1979) (see Fig. \ref{figura-Markov}) published \textit{On mean values and exterior densities}; cf. \cite{Markov1938}, in which he removed the assumption of compactness by extending Riesz's result to locally compact spaces. He proved that continuous and positive linear functionals on $\mathcal{C}_c(\Omega)$ ---the space of continuous functions with compact support--- can also be represented by regular Borel measures.

\begin{figure}[!ht]
\centering
\includegraphics[scale=0.125]{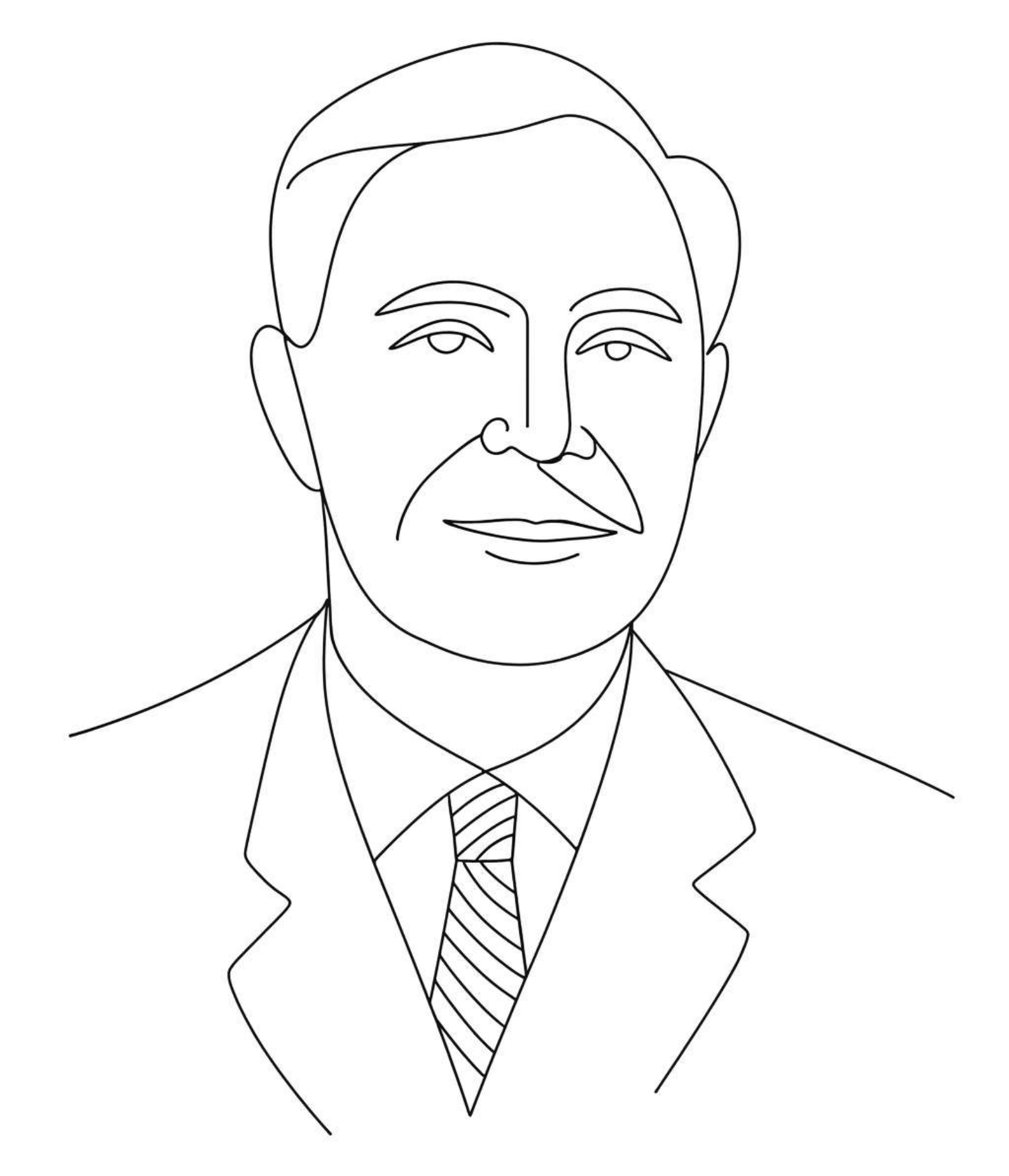}
\caption{Andrei Andreevich Markov Jr. (1903--1979).}
\label{figura-Markov}
\end{figure}

Although his work, written in Russian, had little diffusion in the West, it established a crucial link between Riesz's theory and the modern formulation later completed by Shizuo Kakutani.

Finally, in 1941, Shizuo Kakutani (1911--2004) (see Fig. \ref{figura-Kakutani}) unified the ideas of Riesz and Markov within the framework of spaces of continuous functions. In his paper \textit{Concrete representations of abstract (M)-spaces (A characterization of the space of continuous functions)}; cf. \cite{Kakutani1941}, he formulated the definitive version of the Riesz--Markov--Kakutani theorem, stating that every continuous and positive linear functional on $\mathcal{C}(\Omega)$, with $\Omega$ compact Hausdorff, can be represented by a regular Borel measure.

\begin{figure}[!ht]
\centering
\includegraphics[scale=0.125]{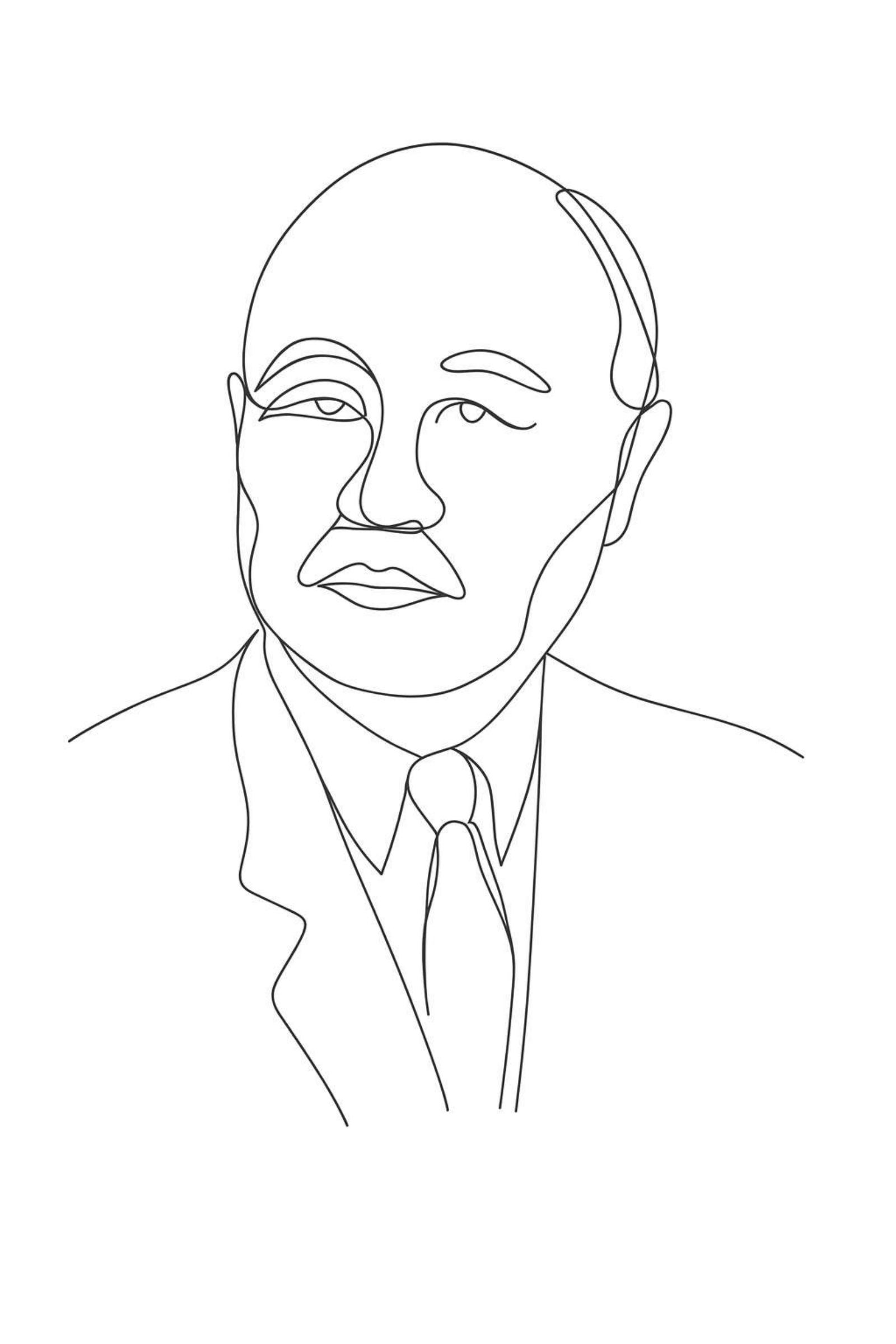}
\caption{Shizuo Kakutani (1911--2004).}
\label{figura-Kakutani}
\end{figure}

With this, the abstraction of functional analysis reached its most concrete expression: functionals became measures, and linear operations became integrals. The story that began with matrix representations in $\mathbb{R}^N$ culminated in a theory encompassing infinite-dimensional and topological structures.

The journey from Fréchet and Riesz to Kakutani not only describes a sequence of mathematical results but also narrates a profound transformation in the very way linearity is conceived. We now proceed to present each of the named representation theorems in a rigorous manner. 

\section{From Hilbert Spaces to Measures: Formulations of the Representation Principle}

This section presents the exact versions of the Riesz representation theorems that will be used throughout the rest of this work. Since each result admits different formulations depending on the functional context in which it is stated, it is convenient to fix from the outset the precise hypotheses and conclusions that we will adopt.

The order of exposition follows the natural evolution of the representation principle: we begin with the Fréchet--Riesz theorem in Hilbert spaces (see \cite[Theorem 5.25]{Folland1999}), continue with the Riesz--Stieltjes theorem on the real line (see \cite[Theorem 9.9]{Bartle1995}) and the Riesz theorem in $L^p$ spaces (see \cite[Theorem 8.15]{Bartle1995}), and conclude with the Riesz--Markov--Kakutani theorem (see \cite[Theorem 1.1]{Espejo2023}) for spaces of continuous functions with compact support.

These formulations establish the functional framework that will serve as the common foundation for the developments that follow.

\subsection{From Projection to Functional: The Fréchet--Riesz Theorem}

The starting point of the representation principle lies in Hilbert spaces, where the notion of duality is expressed through the inner product.

\begin{theorem}[Fréchet--Riesz, 1907]\label{TeoFrechetRiesz}
Let $\mathcal{H}$ be a real Hilbert space. If $L:\mathcal{H} \to \mathbb{R}$ is a linear and continuous functional, then there exists a unique vector $w \in \mathcal{H}$ such that
$$
L(u) = \langle u, w \rangle
\qquad \forall\, u \in \mathcal{H}.
$$
\end{theorem}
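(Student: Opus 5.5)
The plan is the classical orthogonal-complement argument, built on the projection theorem for closed subspaces of a Hilbert space.

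\textbf{Uniqueness.} Suppose $w_1,w_2\in\mathcal{H}$ both represent $L$. Then $\langle u, w_1-w_2\rangle = 0$ for every $u\in\mathcal{H}$. Taking $u=w_1-w_2$ gives $\|w_1-w_2\|^2=0$, so $w_1=w_2$.

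\textbf{Existence.} If $L\equiv 0$ we take $w=0$, so assume $L\neq 0$. Let $N=\ker L$. Since $L$ is continuous and linear, $N$ is a closed subspace, and it is proper because $L\neq 0$. The key step is to produce a nonzero vector $z\in N^{\perp}$. For this I will use the projection theorem: every closed subspace $M$ satisfies $\mathcal{H}=M\oplus M^{\perp}$. Concretely, pick $x_0\notin N$ and let $y_0$ be the nearest point of $N$ to $x_0$; then $z=x_0-y_0$ is nonzero and lies in $N^{\perp}$.

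\textbf{Main obstacle.} I expect the hardest step to be the existence of this nearest point, since it is where completeness of $\mathcal{H}$ enters. Let $d=\inf_{y\in N}\|x_0-y\|$ and take a minimizing sequence $(y_n)\subset N$. By the parallelogram law,
$$
\|y_n-y_m\|^2 = 2\|x_0-y_n\|^2+2\|x_0-y_m\|^2-4\Bigl\|x_0-\tfrac{y_n+y_m}{2}\Bigr\|^2 .
$$
Since $\frac{y_n+y_m}{2}\in N$, the last term is at most $-4d^2$, so the right-hand side tends to $0$ and $(y_n)$ is Cauchy. Completeness of $\mathcal{H}$ and closedness of $N$ give a limit $y_0\in N$ with $\|x_0-y_0\|=d$. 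To see that $z=x_0-y_0$ is orthogonal to $N$, fix $y\in N$ and consider
$$
t\mapsto \|z-ty\|^2,
$$
which is minimized at $t=0$. Setting its derivative at $t=0$ to zero gives $\langle z,y\rangle=0$.

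\textbf{Conclusion.} With $z\neq 0$ in $N^{\perp}$, observe that for any $u\in\mathcal{H}$ the vector $L(u)z-L(z)u$ lies in $N$, because $L$ annihilates it. Hence
$$
\langle L(u)z-L(z)u,\,z\rangle=0,
$$
which rearranges to $L(u)\|z\|^2=L(z)\langle u,z\rangle$. Therefore
$$
w=\frac{L(z)}{\|z\|^2}\,z
$$
satisfies $L(u)=\langle u,w\rangle$ for all $u\in\mathcal{H}$. Everything is over the reals, so no conjugation issues arise.
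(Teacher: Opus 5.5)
Your proof is correct, and it is the standard argument. The paper does not prove this theorem itself but cites Folland's Theorem~5.25, which proceeds the same way: $\ker L$ is a proper closed subspace, the projection theorem gives a nonzero $z\in(\ker L)^{\perp}$, and the observation $L(u)z-L(z)u\in\ker L$ finishes the proof; your minimizing-sequence and parallelogram-law step simply reproves the projection theorem that Folland invokes.
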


This result, which identifies the dual of a Hilbert space with the space itself, preserves entirely the geometric intuition of the finite-dimensional case: applying a functional is equivalent to projecting onto a specific direction in the space.

\subsection{From Continuous Functions to Integrals: The Riesz--Stieltjes Theorem}

When passing from vector spaces to spaces of functions, the notion of projection is transformed into integration.  

Consider the space $\mathcal{C}([a,b])$ of real continuous functions on a compact interval $[a,b] \subset \mathbb{R}$, endowed with the supremum norm
$$
\|f\|_{\infty} = \sup_{x \in [a,b]} |f(x)|.
$$

With this norm, $\mathcal{C}([a,b])$ is a Banach space.

\begin{theorem}[Riesz--Stieltjes, 1909]\label{RScompacto}
Let $L:\mathcal{C}([a,b]) \to \mathbb{R}$ be a linear and continuous functional.  
Then there exists a function $\alpha:[a,b] \to \mathbb{R}$ of bounded variation such that
$$
L(f) = \int_{[a,b]} f(t)\, d\lambda_{\alpha}(t)
\qquad \forall\, f \in \mathcal{C}([a,b]),
$$
where $\lambda_{\alpha}$ denotes the Lebesgue--Stieltjes measure generated by $\alpha$.  

The function $\alpha$ is unique once its value at one point is fixed and is required to be right-continuous.  
If, in addition, $L$ is positive ($L(f) \ge 0$ for every $f \ge 0$), then $\alpha$ is nondecreasing.
\end{theorem}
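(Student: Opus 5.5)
We plan to follow Riesz's classical route: extend $L$ to a larger space containing step-type indicator functions, read off $\alpha$ from the values on these indicators, and then identify $L$ with integration against $\lambda_\alpha$.

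\textbf{Step 1: extension to bounded functions.} View $\mathcal{C}([a,b])$ as a closed subspace of the Banach space $B([a,b])$ of bounded functions with the supremum norm. By the Hahn--Banach theorem, $L$ extends to a functional $\tilde L$ on $B([a,b])$ with $\|\tilde L\|=\|L\|$.

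\textbf{Step 2: definition of $\alpha$ and bounded variation.} Set $\chi_t=\mathbf 1_{[a,t]}$ for $a\le t\le b$, and define
$$
\beta(a)=0, \qquad \beta(t)=\tilde L(\chi_t)\quad (t>a).
$$
For a partition $a=t_0<\dots<t_n=b$, put $\varepsilon_i=\operatorname{sgn}(\beta(t_i)-\beta(t_{i-1}))$. The function $\sum_i \varepsilon_i(\chi_{t_i}-\chi_{t_{i-1}})$ has sup norm at most $1$. Testing $\tilde L$ on it gives $\sum_i|\beta(t_i)-\beta(t_{i-1})|\le\|L\|$, so $\beta$ has bounded variation.

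\textbf{Step 3: the Stieltjes formula.} For $f$ continuous, uniform continuity shows that the step functions
$$
\sum_i f(t_i)\,(\chi_{t_i}-\chi_{t_{i-1}})
$$
converge to $f$ uniformly on $(a,b]$. Adding the term $f(a)\chi_a$ fixes the endpoint. Applying $\tilde L$ shows that $L(f)$ is the limit of Riemann--Stieltjes sums, that is, $L(f)=\int_a^b f\,d\beta$ with the appropriate jump at $a$.

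\textbf{Step 4: normalization to right-continuous $\alpha$.} A function of bounded variation has at most countably many discontinuities and one-sided limits everywhere. Let $\alpha$ be the right-continuous modification of $\beta$ on $(a,b)$, keeping the endpoint values so that the total mass is preserved. The Riemann--Stieltjes integral of a continuous $f$ is unchanged by this modification, since it depends only on $\beta$ through its values off a countable set plus the endpoints. Finally, identify the Riemann--Stieltjes integral with $\int f\,d\lambda_\alpha$. This holds because the Lebesgue--Stieltjes measure gives $\lambda_\alpha((s,t])=\alpha(t)-\alpha(s)$, with the atom at $a$ handled by setting $\alpha(a^-)=0$, and both integrals are limits of the same sums.

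\textbf{Step 5: uniqueness.} Suppose two right-continuous normalized functions give the same functional. Then the signed measure $\lambda_{\alpha_1}-\lambda_{\alpha_2}$ annihilates $\mathcal{C}([a,b])$. Approximate $\mathbf 1_{[a,t]}$ by the continuous functions $f_n$ equal to $1$ on $[a,t]$, $0$ on $[t+1/n,b]$, and linear in between. Dominated convergence together with right-continuity then gives $\alpha_1(t)-\alpha_2(t)$ equal to a constant, which the normalization at one point forces to be zero.

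\textbf{Step 6: positivity.} If $L\ge0$, apply $L$ to these same $f_n$ built from $\mathbf 1_{(s,t]}$. In the limit this gives $\alpha(t)-\alpha(s)=\lambda_\alpha((s,t])\ge0$, so $\alpha$ is nondecreasing. Note that the Hahn--Banach extension need not be positive, which is why positivity is read off through continuous approximants rather than from $\beta$ directly.

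The main obstacle is Step 4. Reconciling the Hahn--Banach $\beta$ with a right-continuous $\alpha$ and with the measure $\lambda_\alpha$ requires careful handling of jumps and of the endpoint $a$. I would treat the atom at $a$ explicitly, by allowing $\lambda_\alpha(\{a\})=\alpha(a)-\alpha(a^-)$ with the convention $\alpha(a^-)=0$, and verify the identity on the step approximants before passing to the limit.
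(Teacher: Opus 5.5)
The paper does not prove Theorem~\ref{RScompacto}. It quotes the result from Bartle and only sketches the passage to $\mathbb{R}$ in Theorem~\ref{TeoRSenR}, so your argument has to stand on its own. What you propose is the classical Hahn--Banach proof of Riesz's theorem, and the plan is sound.

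The natural alternative is measure-first: write $L=L^+-L^-$ with $L^\pm$ positive, apply Theorem~\ref{TeoRieszMarkovKakutani} to each to get $\mu^\pm$, and put $\alpha(t)=(\mu^+-\mu^-)([a,t])$. That $\alpha$ is automatically right-continuous and carries the atom at $a$. Your route uses nothing beyond Hahn--Banach and Riemann--Stieltjes sums, and it gives the variation bound by $\|L\|$ directly. The price is exactly the endpoint bookkeeping you flag, and as written two steps get it wrong.

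\textbf{Step 2.} You set $\beta(a)=0$ while $\chi_a=\mathbf 1_{\{a\}}\neq 0$. So $\tilde L(\chi_{t_1}-\chi_{t_0})=\beta(t_1)-\tilde L(\mathbf 1_{\{a\}})$, not $\beta(t_1)-\beta(t_0)$. Take $L(f)=f(a)$ with the norm-preserving extension $\tilde L(g)=g(a)$. Then $\beta=\mathbf 1_{(a,b]}$, your test function $h$ satisfies $\tilde L(h)=h(a)=0$, yet $\sum_i|\beta(t_i)-\beta(t_{i-1})|=1$. The fix is to take $\chi_a:=0$ throughout. The first block becomes $\mathbf 1_{[a,t_1]}$, the bound holds as claimed, and Step 3 needs no endpoint patch.

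\textbf{Step 4.} ``Keeping the endpoint values'' is incompatible with right-continuity at $a$. In the same example $\alpha(a)=\beta(a)=0$ but $\alpha(a^+)=1$. Your formula $\lambda_\alpha(\{a\})=\alpha(a)-\alpha(a^-)$ would then put no mass at $a$, losing $\delta_a$. You must set $\alpha(a):=\beta(a^+)$. The Riemann--Stieltjes integral is then \emph{not} unchanged: it drops $f(a)\beta(a^+)$. The correct accounting is that this term is restored exactly by the atom $\lambda_\alpha(\{a\})=\beta(a^+)$.

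\textbf{Step 5.} With the convention $\alpha(a^-)=0$ there is no free additive constant, so Step 5 yields $\alpha_1=\alpha_2$ outright. The statement's ``unique once its value at one point is fixed'' is itself vague about the atom at $a$, so say explicitly which convention you adopt.

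\textbf{Step 6.} The caveat here is wrong. We have $\|\tilde L\|=\|L\|=L(1)=\tilde L(1)$, so for $0\le g\le 1$ you get $\tilde L(1-g)\le\tilde L(1)$, i.e.\ $\tilde L(g)\ge 0$. Hence a norm-preserving extension of a positive functional is positive, and $\beta$ is already nondecreasing. Your approximation argument remains valid, just unnecessary.
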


Riesz’s idea was to replace the vector that “represents” $L$ with a function of bounded variation, and the finite sum with an integral.  
In this way, the correspondence between functionals and measures was made explicit on the real line.

Similarly, one can formulate an analogous result for continuous functions with compact support on the real line.  
Let $\mathcal{C}_c(\mathbb{R})$ denote the space of real continuous functions with compact support (that is, functions that vanish outside a compact subset of $\mathbb{R}$), also endowed with the supremum norm.  
The result can be stated as follows.

\begin{theorem}[Riesz--Stieltjes on $\mathbb{R}$]\label{TeoRSenR}
Let $L:\mathcal{C}_c(\mathbb{R}) \to \mathbb{R}$ be a linear and continuous functional.  
Then there exists a function $\alpha:\mathbb{R} \to \mathbb{R}$ of bounded variation on every compact interval such that
$$
L(f) = \int_{\mathbb{R}} f(t)\, d\lambda_{\alpha}(t)
\qquad \forall\, f \in \mathcal{C}_c(\mathbb{R}),
$$
where $\lambda_{\alpha}$ is the associated Lebesgue--Stieltjes measure.  

The function $\alpha$ is unique (up to an initial value) and can be chosen to be right-continuous.  
If $L$ is positive, then $\alpha$ is nondecreasing.
\end{theorem}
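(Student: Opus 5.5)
The plan is to deduce the statement from Theorem \ref{RScompacto} by an exhaustion of $\mathbb{R}$ by the compact intervals $[-n,n]$, and then to glue the resulting functions of bounded variation.

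\textbf{Step 1: local functionals.} For each $n\in\mathbb{N}$, define an extension operator $E_n:\mathcal{C}([-n,n])\to\mathcal{C}_c(\mathbb{R})$ as follows. Let $E_nf=f$ on $[-n,n]$. On $[n,n+1]$, interpolate linearly from $f(n)$ down to $0$, and do the same on $[-n-1,-n]$ from $f(-n)$. Outside $[-n-1,n+1]$, set $E_nf=0$. Then:
\begin{itemize}
\item $E_n$ is linear;
\item $\|E_nf\|_\infty=\|f\|_\infty$;
\item $E_n$ maps nonnegative functions to nonnegative functions.
\end{itemize}
Hence $L_n:=L\circ E_n$ is a continuous linear functional on $\mathcal{C}([-n,n])$, and it is positive whenever $L$ is. Theorem \ref{RScompacto} gives a right-continuous function $\alpha_n$ of bounded variation on $[-n,n]$, normalized by $\alpha_n(0)=0$, with $L_n(f)=\int_{[-n,n]} f\,d\lambda_{\alpha_n}$. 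If $L$ is positive, $\alpha_n$ is nondecreasing.

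\textbf{Step 2: consistency.} Let $g\in\mathcal{C}_c(\mathbb{R})$ have support in $(-n,n)$, and let $m>n$. Then $E_n(g|_{[-n,n]})=g=E_m(g|_{[-m,m]})$. Therefore
$$\int g\,d\lambda_{\alpha_n}=L(g)=\int g\,d\lambda_{\alpha_m}.$$
Now fix $-n<c<d<n$. Approximate the indicator $\chi_{(c,d]}$ pointwise by trapezoidal continuous functions $g_k$ with support in $(c,d+1/k)$, equal to $1$ on $[c+1/k,d]$, and with $0\le g_k\le1$. Dominated convergence with respect to the finite total variation measures $|\lambda_{\alpha_n}|$ and $|\lambda_{\alpha_m}|$ gives
$$\alpha_n(d)-\alpha_n(c)=\alpha_m(d)-\alpha_m(c).$$
Right continuity is what makes the limit equal to the increment over $(c,d]$. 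Taking $c=0$ or $d=0$ and using the normalization, $\alpha_m=\alpha_n$ on $(-n,n)$.

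\textbf{Step 3: gluing and representation.} By Step 2, $\alpha(t):=\alpha_n(t)$ for any $n>|t|$ is well defined. It is right-continuous, of bounded variation on every compact interval, and nondecreasing if $L$ is positive. For $f\in\mathcal{C}_c(\mathbb{R})$, choose $n$ with $\operatorname{supp}f\subset(-n,n)$. Then
$$L(f)=L_n(f|_{[-n,n]})=\int f\,d\lambda_{\alpha_n}=\int_{\mathbb{R}} f\,d\lambda_\alpha,$$
because $\lambda_\alpha$ and $\lambda_{\alpha_n}$ agree on Borel subsets of $(-n,n)$: they agree on half-open intervals, hence on the generated $\sigma$-algebra.

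\textbf{Step 4: uniqueness.} Suppose two right-continuous functions $\alpha$ and $\beta$ both represent $L$. Then $\int g\,d\lambda_\alpha=\int g\,d\lambda_\beta$ for all $g\in\mathcal{C}_c(\mathbb{R})$. The trapezoid argument of Step 2 shows that $\alpha$ and $\beta$ have the same increments on every interval $(c,d]$. Hence $\alpha-\beta$ is constant, so $\alpha$ is determined up to its initial value.

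\textbf{Main obstacle.} The delicate point is Step 2, namely passing from equality of integrals of continuous functions to equality of the functions $\alpha_n$ for signed Lebesgue--Stieltjes measures. This needs the finiteness of the total variation on compacts, to justify dominated convergence, together with right continuity, to identify the limits with the increments $\alpha(d)-\alpha(c)$. The boundary distortion introduced by $E_n$ at $\pm n$ causes no trouble, because the comparison only ever uses test functions supported strictly inside $(-n,n)$.
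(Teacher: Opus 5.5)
Your proposal is correct and follows the paper's strategy: apply Theorem \ref{RScompacto} on each $[-n,n]$, normalize with $\alpha_n(0)=0$ and right-continuity, glue the $\alpha_n$ into $\alpha$, and inherit monotonicity in the positive case. Your extension operator $E_n$ and your trapezoid argument, which proves consistency only on the open intervals $(-n,n)$, make rigorous two steps the paper handles loosely: the paper's ``restriction'' $L|_{\mathcal{C}([-n,n])}$ is not literally defined because $\mathcal{C}([-n,n])\not\subset\mathcal{C}_c(\mathbb{R})$, and its claim $\alpha_m|_{[-n,n]}=\alpha_n$ via uniqueness need not hold at the endpoints $\pm n$, where $\lambda_{\alpha_n}$ can carry boundary mass coming from outside $[-n,n]$.
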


The proof follows by extending the compact case to the entire real line. Consider the increasing family of intervals $[-n,n]$, $n \in \mathbb{N}$, and define on each of them the restricted functional $L_n = L|_{\mathcal{C}([-n,n])}$.  

Each $L_n$ is linear and continuous, and by Theorem~\ref{RScompacto} there exists a function of bounded variation $\alpha_n:[-n,n] \to \mathbb{R}$ such that
$$
L(f) = \int_{[-n,n]} f(t)\, d\lambda_{\alpha_n}(t)
\qquad \forall\, f \in \mathcal{C}([-n,n]).
$$

Fixing a normalization, for instance $\alpha_n(0)=0$, and requiring each $\alpha_n$ to be right-continuous, the uniqueness part of the theorem ensures that the functions agree on their overlaps: if $m>n$, then $\alpha_m|_{[-n,n]} = \alpha_n$.  

Hence one can define a function $\alpha:\mathbb{R} \to \mathbb{R}$ such that $\alpha(x) = \alpha_n(x)$ whenever $x \in [-n,n]$. This function has bounded variation on every compact interval and satisfies
$$
L(f) = \int_{\mathbb{R}} f(t)\, d\lambda_\alpha(t)
\qquad \forall\, f \in \mathcal{C}_c(\mathbb{R}).
$$

If, in addition, $L$ is positive, each $\alpha_n$ is nondecreasing, and by compatibility, so is $\alpha$.  
Thus, the integral representation of continuous linear functionals extends naturally to the whole real line, removing the need for compactness.

\subsection{From Continuous Functions to Lebesgue Spaces: Riesz in \texorpdfstring{$L^{p}$}{}}

The next step in the evolution of the representation principle occurs in the context of the Lebesgue spaces $L^p$, where linearity and integration intertwine in their most general form.

Let $(\Omega, \mathcal{F}, \mu)$ be a measure space and $p,q \in (1,\infty)$ be conjugate exponents, that is, such that $\frac{1}{p} + \frac{1}{q} = 1$.

\begin{theorem}[Riesz in $L^p$, 1916]\label{TeoRieszLp}
Let $L:L^p(\Omega, \mathcal{F}, \mu) \to \mathbb{R}$ be a linear and continuous functional. Then there exists a unique function $g \in L^q(\Omega, \mathcal{F}, \mu)$ such that
$$
L(f) = \int_{\Omega} f(\omega)\, g(\omega)\, d\mu(\omega)
\qquad \forall\, f \in L^p(\Omega, \mathcal{F}, \mu).
$$
\end{theorem}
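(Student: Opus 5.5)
The plan is the classical route through the Radon--Nikodym theorem. I first treat a finite measure, then extend to $\sigma$-finite measures, and finally handle an arbitrary measure space, using that $1<p<\infty$. Uniqueness comes first because it is easy. If $\int f g_1\,d\mu=\int f g_2\,d\mu$ for all $f\in L^p$, put $h=g_1-g_2\in L^q$ and test with $f=|h|^{q-1}\operatorname{sgn}(h)$. This $f$ lies in $L^p$, since $(q-1)p=q$. The test gives $\int|h|^q\,d\mu=0$, so $h=0$ a.e.

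\emph{Finite measure.} Suppose $\mu(\Omega)<\infty$. Then every indicator $\mathbf 1_E$ lies in $L^p$, and I define $\nu(E)=L(\mathbf 1_E)$ for $E\in\mathcal F$.
\begin{itemize}
\item \emph{Countable additivity.} If $E=\bigsqcup E_n$, then $\|\mathbf 1_E-\mathbf 1_{E_1\cup\dots\cup E_N}\|_p=\mu\bigl(\bigcup_{n>N}E_n\bigr)^{1/p}\to 0$. This is where $p<\infty$ is used. Continuity of $L$ then gives $\nu(E)=\sum_n\nu(E_n)$.
\item \emph{Radon--Nikodym.} The signed measure $\nu$ is absolutely continuous with respect to $\mu$. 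The Radon--Nikodym theorem therefore yields $g\in L^1(\mu)$ with $L(\mathbf 1_E)=\int_E g\,d\mu$.
\item \emph{Extension of the identity.} By linearity $L(f)=\int fg\,d\mu$ holds for simple $f$.
\item \emph{$g\in L^q$.} Set $E_n=\{|g|\le n\}$ and $f_n=|g|^{q-1}\operatorname{sgn}(g)\mathbf 1_{E_n}$, which is a bounded function, hence a uniform limit of simple functions. The identity extends to bounded $f$ because $g\in L^1$. Testing with $f_n$ gives
\[
\int_{E_n}|g|^q\,d\mu=L(f_n)\le\|L\|\Bigl(\int_{E_n}|g|^q\,d\mu\Bigr)^{1/p}.
\]
Hence $\bigl(\int_{E_n}|g|^q\bigr)^{1/q}\le\|L\|$, and monotone convergence gives $\|g\|_q\le\|L\|$.
\item \emph{All of $L^p$.} Both sides of $L(f)=\int fg\,d\mu$ are continuous in $f\in L^p$, by Hölder on the right. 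Simple functions are dense in $L^p$, so the identity holds everywhere.
\end{itemize}

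\emph{$\sigma$-finite measure.} Write $\Omega=\bigsqcup A_k$ with $\mu(A_k)<\infty$ and apply the finite case to $L$ restricted to $L^p(A_k)$, obtaining $g_k$. By uniqueness, $g=\sum_k g_k\mathbf 1_{A_k}$ is well defined. The same estimate on the finite unions $A_1\cup\dots\cup A_N$ gives $\|g\mathbf 1_{A_1\cup\dots\cup A_N}\|_q\le\|L\|$, so $g\in L^q$. The identity holds for $f$ supported in finitely many $A_k$, and it passes to every $f\in L^p$ by dominated convergence in $L^p$.

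\emph{General measure: the main obstacle.} Here I would use the following device. For each $\sigma$-finite set $E$, the previous step gives $g_E\in L^q(E)$ with $\|g_E\|_q\le\|L\|$, and uniqueness makes these compatible on intersections. Let $s=\sup_E\|g_E\|_q^q\le\|L\|^q$. Choose $E_n$ with $\|g_{E_n}\|_q^q\to s$ and set $E^*=\bigcup E_n$, which is $\sigma$-finite and attains the supremum. Then:
\begin{itemize}
\item For any $\sigma$-finite $F$ disjoint from $E^*$ we have $\|g_{E^*\cup F}\|_q^q=s+\|g_F\|_q^q\le s$. Hence $g_F=0$, so $L$ vanishes on $L^p(F)$.
\item Every $f\in L^p$ has $\sigma$-finite support $\{f\neq0\}=\bigcup_n\{|f|>1/n\}$, each piece of finite measure by Chebyshev.
\item Consequently $L(f)=L(f\mathbf 1_{E^*})+L(f\mathbf 1_{\Omega\setminus E^*})=\int f g\,d\mu$, where $g$ is $g_{E^*}$ extended by zero.
\end{itemize}
This last step genuinely needs $q<\infty$, i.e.\ $p>1$, for the additivity of $\|\cdot\|_q^q$ over disjoint sets. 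It is the delicate point, because it is exactly where the result fails for $p=1$ without $\sigma$-finiteness.
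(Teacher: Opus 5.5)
The paper does not prove this statement. It quotes it from \cite[Theorem 8.15]{Bartle1995} and uses it as a black box, so there is no in-paper argument to compare yours with. Your proof is correct and complete. It is the standard Radon--Nikod\'ym-based argument found in textbooks such as the cited one: treat a finite measure first, extend by exhaustion with finite-measure pieces, and then pass to a $\sigma$-finite set $E^*$ maximizing $\|g_E\|_q$, where $q<\infty$ is exactly what makes $\|\cdot\|_q^q$ additive.

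Two observations relative to the paper are worth making. First, the paper only ever applies the theorem on a probability space, so your finite-measure stage alone covers its needs. Second, that stage rests on the Radon--Nikod\'ym theorem. The paper later claims that its construction of conditional expectation ``requires neither the Radon--Nikod\'ym theorem nor geometric arguments in $L^{2}$''. Your route shows this holds only if $L^p$ duality is proved some other way, for instance via uniform convexity and reflexivity of $L^p$.
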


This result characterizes the dual of $L^p$ as the conjugate space $L^q$, and can be viewed as a generalization of the Fréchet--Riesz theorem from inner products to integral products.

\subsection{The Principle in Its General Form: The Riesz--Markov--Kakutani Theorem}

The culmination of the representation principle is achieved in the general topological setting.  

Let $(\Omega, \tau)$ be a locally compact Hausdorff space, and let $\mathcal{C}_c(\Omega)$ denote the space of continuous functions with compact support, endowed with the supremum norm.

\begin{theorem}[Riesz--Markov--Kakutani, 1941]\label{TeoRieszMarkovKakutani}
Let $L:\mathcal{C}_c(\Omega) \to \mathbb{R}$ be a linear, positive, and continuous functional.  
Then there exists a unique regular Borel measure $\mu$ on $\Omega$ such that
$$
L(f) = \int_{\Omega} f(\omega)\, d\mu(\omega)
\qquad \forall\, f \in \mathcal{C}_c(\Omega).
$$
\end{theorem}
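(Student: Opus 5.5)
The plan is the standard Rudin-style construction: build an outer measure from $L$ using Urysohn's lemma, show that the Borel sets are measurable for it, and then verify the integral representation. Throughout I write $f \prec V$ to mean $f\in\mathcal{C}_c(\Omega)$, $0\le f\le 1$, and $\operatorname{supp} f\subset V$, and $K\prec f$ to mean $f\in\mathcal{C}_c(\Omega)$, $0\le f\le 1$, and $f=1$ on $K$.

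\textbf{Uniqueness.} First I show that for compact $K$,
$$\mu(K)=\inf\{L(f): K\prec f\}.$$
If $\mu$ represents $L$ and is regular, take an open $V\supset K$ with $\mu(V)<\mu(K)+\varepsilon$. Urysohn's lemma for locally compact Hausdorff spaces gives $K\prec f\prec V$. Then
$$\mu(K)\le \int f\,d\mu=L(f)\le \mu(V)<\mu(K)+\varepsilon .$$
So $\mu$ is determined on compact sets. Inner regularity on open sets then determines $\mu$ on open sets, and outer regularity determines it on all Borel sets.

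\textbf{Existence.} For open $V$ I define
$$\mu(V)=\sup\{L(f): f\prec V\},$$
and for arbitrary $E\subset\Omega$ I set $\mu(E)=\inf\{\mu(V): V\supset E \text{ open}\}$. The first step is countable subadditivity of this outer measure. Given $f\prec \bigcup_i V_i$, the support of $f$ is compact and so is covered by finitely many $V_i$. A partition of unity subordinate to this finite subcover lets me write $f=\sum_i h_i f$ with each $h_i f\prec V_i$. Positivity of $L$ then gives $L(f)\le\sum_i\mu(V_i)$.

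The second step is the formula $\mu(K)=\inf\{L(f):K\prec f\}$ for compact $K$. For the inequality $\mu(K)\le L(f)$, I use the open sets $V_\alpha=\{f>\alpha\}$ and observe that any $g\prec V_\alpha$ satisfies $g\le f/\alpha$, so $\mu(K)\le \mu(V_\alpha)\le L(f)/\alpha$; letting $\alpha\to1$ gives the claim. The reverse inequality comes from Urysohn's lemma. In particular $\mu(K)<\infty$.

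\textbf{Main obstacle.} The hardest part is showing that the family $\mathcal{M}_F$ of sets of finite outer measure that are inner regular by compacts is closed under countable disjoint unions with additivity. From this one deduces that
$$\mathcal{M}=\{E: E\cap K\in\mathcal{M}_F \text{ for all compact } K\}$$
is a $\sigma$-algebra containing the open sets, and that $\mu$ is countably additive on it. The key sub-step is additivity on disjoint compacts. For disjoint compact $K_1,K_2$ I separate them by $f$ with $K_1\prec f$ and $f=0$ on $K_2$, and then split an almost-optimal $g$ with $K_1\cup K_2\prec g$ as $g=fg+(1-f)g$. Next I show open sets and sets of finite outer measure lie in $\mathcal{M}_F$ under the appropriate hypotheses. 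I then use approximation $K\subset E\subset V$ to obtain closure under differences and unions.

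\textbf{Representation.} It suffices to prove $L(f)\le\int f\,d\mu$ for real $f$; applying this to $-f$ gives equality. Let $[a,b]\supset f(\operatorname{supp} f)$ and take a partition $y_0<a<y_1<\dots<y_n=b$ of mesh $\varepsilon$. Set $E_i=\{y_{i-1}<f\le y_i\}\cap\operatorname{supp} f$ and choose open $V_i\supset E_i$ with $\mu(V_i)<\mu(E_i)+\varepsilon/n$ and $f<y_i+\varepsilon$ on $V_i$. A partition of unity $h_i\prec V_i$ with $\sum_i h_i=1$ on $\operatorname{supp} f$ gives $f=\sum_i h_i f$. Using positivity and $L(h_i)\ge\mu(E_i)$, I estimate $L(f)\le\sum_i (y_i+\varepsilon)L(h_i)$; after shifting by the constant $|a|$ to handle signs, this is at most $\int f\,d\mu+C\varepsilon$. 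Since $\varepsilon>0$ is arbitrary, the inequality follows.

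Continuity of $L$ is not needed beyond positivity: positivity alone already makes $L$ bounded on the functions supported in each fixed compact set.
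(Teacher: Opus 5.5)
The paper gives no proof of this statement. It quotes the theorem from the literature (\cite[Theorem 1.1]{Espejo2023}) and uses it as a black box, and only ever on the compact space $\Omega_{x,y;t}$, so there is no argument of the paper to compare yours with. Your outline is the standard construction from Rudin's \emph{Real and Complex Analysis} (Theorem 2.14). Its architecture is sound: the outer measure built from $L$, the compact-set formula, the class $\mathcal{M}_F$, and the partition-of-unity estimate. However, one inequality you invoke is false, and you should state which regularity you actually obtain.

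\textbf{The false inequality.} In the representation step, $L(h_i)\ge\mu(E_i)$ does not hold in general. Only $\sum_j h_j$ equals $1$ on $E_i$, not $h_i$ itself. Where $V_j$ overlaps $E_i$, the mass can be carried by $h_j$, and $L(h_i)<\mu(E_i)$ is easy to arrange, e.g.\ for Lebesgue measure on $\mathbb{R}$. The estimate needs two different facts:
\begin{enumerate}
\item the termwise \emph{upper} bound $L(h_i)\le\mu(V_i)<\mu(E_i)+\varepsilon/n$, which follows from $h_i\prec V_i$;
\item only the \emph{summed} lower bound $\sum_i L(h_i)\ge\mu(\mathrm{supp}\,f)=\sum_i\mu(E_i)$, which follows from your compact-set formula because $\mathrm{supp}\,f\prec\sum_i h_i$.
\end{enumerate}
The shift by $|a|$ exists precisely to separate these two uses:
\[
L(f)\le\sum_{i=1}^{n}(|a|+y_i+\varepsilon)\,L(h_i)-|a|\sum_{i=1}^{n}L(h_i)
\le\sum_{i=1}^{n}(|a|+y_i+\varepsilon)\Bigl(\mu(E_i)+\frac{\varepsilon}{n}\Bigr)-|a|\,\mu(\mathrm{supp}\,f).
\]
Here the coefficients $|a|+y_i+\varepsilon$ are nonnegative. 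Since $f>y_i-\varepsilon$ on $E_i$, the right-hand side is at most $\int f\,d\mu+\varepsilon\bigl(2\mu(\mathrm{supp}\,f)+|a|+b+\varepsilon\bigr)$.

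\textbf{Which regularity.} Your uniqueness argument uses outer regularity on Borel sets and inner regularity on open sets. Your construction, restricted from $\mathcal{M}$ to the Borel sets, delivers exactly this, together with inner regularity on sets of finite measure. That is the only reading under which the statement is true on an arbitrary locally compact Hausdorff space. A representing measure that is both inner and outer regular on every Borel set need not exist. The standard counterexample (Rudin, Ch.~2 exercises) takes the plane as a disjoint union of vertical lines with $Lf=\sum_j\int f(x_j,y)\,dy$. There the $x$-axis is closed and every open set containing it has infinite measure, yet its compact subsets are finite and hence null. On the compact space $\Omega_{x,y;t}$, where the paper applies the theorem, every Borel set has finite measure, so the distinction disappears there.

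\textbf{Two smaller corrections.} In your ``main obstacle'' paragraph, it is sets in $\mathcal{M}$ of finite measure, not arbitrary sets of finite outer measure, that belong to $\mathcal{M}_F$. Also, $\mathcal{M}_F$ is closed under countable disjoint unions only when the union has finite measure.
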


This theorem, arising from the synthesis of results by Riesz (1909, 1937), Markov (1938), and Kakutani (1941), provides the most general formulation of the representation principle: continuous and positive linear functionals on spaces of continuous functions correspond exactly to regular Borel measures.

With these statements, we have established the functional and notational framework in which we shall work.  
Each of them illustrates a stage in the same geometric principle that inspired Riesz: every continuous linear action can ultimately be understood as a form of integration. 

The following and last section of this work presents how fundamental concepts of probability theory (such as expectation, distribution, conditional expectation, and the Wiener measure) are analytic manifestations of a single principle of representation. Furthermore, fundamental probability theory appears not merely as an extension of measure theory but as the geometric realization of functional analysis itself.

\section{The Representation Principle in Probability Theory}

\subsection{The Fréchet--Riesz Theorem: Expectation as a Vector}

In classical probability theory, it is common to study random variables taking values in $\mathbb{R}^N$, where concepts such as expectation, variance, and distribution admit well-known geometric and analytic interpretations. However, in more general settings ---for instance, in the study of stochastic partial differential equations, randomly evolving processes, or the analysis of random fields--- it is natural to consider random variables that take values in infinite-dimensional Banach or Hilbert spaces; cf. \cite{DaPrato1999}. In such situations, the notion of ``expected value'' is no longer immediate: integrating coordinates is not enough; one needs a notion of integral that is consistent with the vector and topological structure of the space.

The Bochner integral arises precisely to meet this need. It is the natural generalization of the Lebesgue integral to the context of functions with values in Banach spaces, and its goal is to extend the classical concept of expectation to the case of vector-valued random variables. In this framework, integrability is formulated in terms of the space norm and strong measurability, and the integral is defined as the element of the space that coherently reproduces the action of all continuous linear functionals on the integrable function (see \cite[Chs. 2 and 3]{Dinov1993}). This construction preserves the essential properties of scalar integration ---linearity, continuity, and uniqueness--- and provides a solid foundation for the study of random variables in abstract vector spaces.

When the Banach space is in fact a separable Hilbert space, the theory becomes notably transparent. Thanks to the Fréchet--Riesz representation theorem, which identifies each continuous linear functional with a unique vector of the Hilbert space itself, the existence and uniqueness of the expectation of a vector-valued random variable can be derived directly. Thus, the Bochner integral not only offers an elegant and rigorous construction of the expected value in this context, but also highlights the deep connection between the principles of functional analysis and the foundations of modern probability: both revolve around the same idea of linear representation and structural duality.

Let $(\Omega,\mathcal{F},\mathbb{P})$ be a complete probability space and let $(\mathcal{H},\langle \cdot,\cdot\rangle,\|\cdot\|)$ be a separable Hilbert space over $\mathbb{R}$.

The topological structure of $\mathcal{H}$ naturally induces its Borel $\sigma$-algebra, denoted $\mathcal{B}(\mathcal{H})$, generated by the open sets of the space. The pair $(\mathcal{H},\mathcal{B}(\mathcal{H}))$ is therefore a measurable space. In this setting, a function $X:\Omega\to\mathcal{H}$ is called an $\mathcal{H}$-valued random variable if it is measurable, i.e., if for every open set $\mathcal{O}\subset \mathcal{H}$ we have $X^{-1}(\mathcal{O})\in\mathcal{F}$.

Every random variable $X:\Omega\to\mathcal{H}$ naturally induces a probability measure on $\mathcal{H}$ describing how the values of $X$ are distributed in the Hilbert space. This measure, called the distribution of $X$, is the pushforward $X_{\sharp}\mathbb{P}:\mathcal{B}(\mathcal{H})\to[0,1]$ defined by
$$
X_{\sharp}\mathbb{P}(\mathcal{O}) := \mathbb{P}\!\big(X^{-1}(\mathcal{O})\big).
$$

The measure $X_{\sharp}\mathbb{P}$ is a probability measure on $(\mathcal{H},\mathcal{B}(\mathcal{H}))$ and satisfies the change-of-variables formula, namely,
$$
\int_{\Omega} f\big(X(\omega)\big)\, d\mathbb{P}(\omega)
= \int_{\mathcal{H}} f(v)\, d\big(X_{\sharp}\mathbb{P}\big)(v),
$$
for every measurable $f:\mathcal{H}\to\mathbb{R}$ (see \cite[Theorem 2.1]{DaPrato1999}). This identity allows one to formulate integrals directly on $\mathcal{H}$ by transporting $\mathbb{P}$ to the value space of the random variable $X$.

Let $X:\Omega\to\mathcal{H}$ be a random variable. To analyze the size of the values taken by $X$, consider the norm map $\|\cdot\|:\mathcal{H}\to\mathbb{R}$. Since the norm is continuous, it is measurable; consequently, the composition $\|\cdot\|\circ X:\Omega\to\mathbb{R}$ defines a real random variable. This permits one to quantify the magnitude of the values of $X$ and to formulate integrability conditions in $\mathcal{H}$. In particular, we shall be interested in those random variables that satisfy
$$
\int_{\mathcal{H}} \|v\|\, d\big(X_{\sharp}\mathbb{P}\big)(v) < \infty,
$$
a condition that guarantees finiteness of the expected norm of $X$.

Under this hypothesis it will be possible to define a linear and continuous functional on $\mathcal{H}$ by integrating with respect to the distribution of $X$.

Let $X:\Omega\to\mathcal{H}$ be a random variable such that
$$
\int_{\mathcal{H}} \|v\|\, d\big(X_{\sharp}\mathbb{P}\big)(v) < \infty.
$$

Define $L:\mathcal{H}\to\mathbb{R}$ by
$$
L(u):=\int_{\mathcal{H}} \langle u,v\rangle \, d\big(X_{\sharp}\mathbb{P}\big)(v),
$$
which is clearly linear by the properties of the inner product on $\mathcal{H}$. Moreover, applying the Cauchy--Schwarz inequality we obtain
$$
|L(u)| \le \int_{\mathcal{H}} |\langle u,v\rangle|\, d\big(X_{\sharp}\mathbb{P}\big)(v)\, d\big(X_{\sharp}\mathbb{P}\big)(v)
\le \|u\| \int_{\mathcal{H}} \|v\| \, d\big(X_{\sharp}\mathbb{P}\big)(v)
\quad \forall\, u\in\mathcal{H},
$$
which implies that $L$ is continuous on $\mathcal{H}$.

By the Fréchet--Riesz representation theorem (see Theorem \ref{TeoFrechetRiesz}), there exists a unique $\mu\in\mathcal{H}$ such that
$$
\int_{\mathcal{H}} \langle u,v\rangle\, d\big(X_{\sharp}\mathbb{P}\big)(v) \;=\; L(u) \;=\; \langle u,\mu\rangle
\qquad \forall\, u\in\mathcal{H}.
$$

This vector $\mu$ characterizes the functional $L$ and, consequently, the random variable $X$ through its distribution. The element $\mu$ is called the \textbf{Bochner integral} of $X$ and represents its \textbf{expectation}; for convenience, we write $\mu=\mathbb{E}(X)$.

We now present a detailed example.

\begin{example}
Consider the probability space $((0,1),\mathcal{L}(0,1),\lambda)$, where $\mathcal{L}(0,1)$ denotes the Lebesgue $\sigma$-algebra and $\lambda$ the Lebesgue measure. Let $L^2(0,1)$ be the separable Hilbert space endowed with the inner product (see \cite[Chapter 23]{JacodProtter2004})
$$
\langle u,v\rangle \;=\; \int_{(0,1)} u(t)\,v(t)\, d\lambda(t),
\qquad u,v \in L^2(0,1).
$$

Define $\chi:(0,1)\to L^{2}(0,1)$ by
$$
\chi(\omega):=\mathbf{1}_{(0,\omega)}.
$$

We claim that $\chi$ is an $L^2(0,1)$-valued random variable. Indeed, the continuity of $\chi$ with respect to the $L^2(0,1)$ norm can be verified directly: given $\omega_0\in(0,1)$ and $\varepsilon>0$, letting $\delta:=\varepsilon^2>0$, for every $\omega\in(0,1)$ with $|\omega_{0}-\omega|<\delta$ we have
$$
\begin{aligned}
\|\chi(\omega_0)-\chi(\omega)\|_{L^2(0,1)}^2
&= \int_{(0,1)} \big|\mathbf{1}_{(0,\omega_0)}(t)-\mathbf{1}_{(0,\omega)}(t)\big|^2\, d\lambda(t) \\
&= \int_{(0,1)} \mathbf{1}_{(0,\omega_0)\triangle(0,\omega)}(t)\, d\lambda(t) \\
&= \lambda\big((0,\omega_0)\triangle(0,\omega)\big)= |\omega_0-\omega| < \varepsilon^2,
\end{aligned}
$$
where $(0,\omega_0)\triangle(0,\omega)$ denotes the symmetric difference. Hence $\chi$ is continuous, and therefore measurable.

For each $\omega\in(0,1)$ we also have
$$
\|\chi(\omega)\|_{L^2(0,1)}^{2}
= \int_{(0,1)} \big|\mathbf{1}_{(0,\omega)}(t)\big|^2\, d\lambda(t)
= \lambda(0,\omega)=\omega.
$$

Using the change-of-variables formula, we obtain
$$
\begin{aligned}
\int_{L^2(0,1)} \|v\|_{L^2(0,1)}\, d\big(\chi_{\sharp}\lambda\big)(v)
&= \int_{(0,1)} \|\chi(\omega)\|_{L^2(0,1)} \, d\lambda(\omega) \\
&= \int_{(0,1)} \left( \int_{(0,1)} \big|\mathbf{1}_{(0,\omega)}(t)\big|^2\, d\lambda(t) \right)^{\!1/2} d\lambda(\omega) \\
&= \int_{(0,1)} \omega^{1/2}\, d\lambda(\omega)
= \frac{2}{3}.
\end{aligned}
$$

Let $u\in L^{2}(0,1)$ be arbitrary. Again, by the change-of-variables formula and Fubini’s theorem (see \cite[Theorem 1.7.2]{Durrett2019}),
$$
\begin{aligned}
\int_{L^2(0,1)} \langle v,u\rangle\, d\big(\chi_{\sharp}\lambda\big)(v)
&= \int_{(0,1)} \langle \chi(\omega),u \rangle\, d\lambda(\omega)\\
&= \int_{(0,1)} \left( \int_{(0,1)} \mathbf{1}_{(0,\omega)}(t)\,u(t)\, d\lambda(t) \right) d\lambda(\omega)\\
&= \int_{(0,1)} \left( \int_{(0,1)} \mathbf{1}_{(t,1)}(\omega)\, u(t)\, d\lambda(\omega) \right) d\lambda(t)\\
&= \int_{(0,1)} (1-t)\,u(t)\, d\lambda(t).
\end{aligned}
$$

By the Fréchet--Riesz theorem, the expectation (Bochner integral) of the random variable $\chi$ is the function $\mu:(0,1)\to\mathbb{R}$ given by $\mu(t)=1-t$, which belongs to $L^2(0,1)$ (see Fig. \ref{Bochner-integral}).

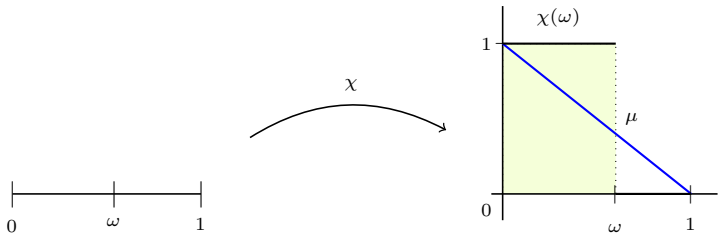
\begin{figure}[!ht]
    \centering
	\begin{tikzpicture}[xscale=1,yscale=1]
	\draw[-, semithick] (0,0)--(2.5,0);
	\draw (0,0) node{$|$}; \draw (0,-0.22) node[below]{$_{0}$};
	\draw (2.5,0) node{$|$}; \draw (2.5,-0.2) node[below]{$_{1}$};
	\draw (1.35,0) node{$|$}; \draw (1.35,-0.2) node[below]{$_{\omega}$};

    \draw[<-, semithick] (5.75,0.85) to[bend right] (3.15,0.75);
    \draw(4.5,1.25)node[above]{$_{\chi}$};
    \draw [fill=lime!15,  dotted]  (6.5,0)--(6.5,2)--(8,2)--(8,0);
	\draw[-,semithick] (6.35,0) to (9.35,0);
	\draw[-, semithick] (6.5,-0.35)--(6.5,2.5);
    \draw[-, thick, blue] (6.5,2)--(9,0);
    \draw (8,1) node[right]{$_{\mu}$};
    \draw (7.25,2.1) node[above]{$_{\chi(\omega)}$};
    \draw (6.5,2) node{$_{-}$}; 
    \draw (6.5,2) node[left] {$_{1}$}; 
    \draw (6.5,-0.22) node[left] {$_{0}$};
    \draw (8,0) node{$_{|}$}; 
    \draw (8,-0.27) node[below] {$_{\omega}$};
    \draw (9,0) node{$_{|}$}; 
    \draw (9,-0.2) node[below] {$_{1}$};
    \draw [thick,black] (8,0)--(9,0);
    \draw [thick,black] (6.5,2)--(8,2);
\end{tikzpicture}
\caption{Graphical representation of the expected value of the random variable $\chi:(0,1)\to L^{2}(0,1)$.}
\label{Bochner-integral}
\end{figure}
\end{example}

Thus, the notion of expectation in separable Hilbert spaces arises naturally as a consequence of the Fréchet--Riesz theorem: the expected value of an $\mathcal{H}$-valued random variable is the unique vector of $\mathcal{H}$ that represents the integration functional induced by $X$.

\subsection{The Riesz--Stieltjes Theorem: Distribution as representative measure}

In modern probability theory, the concept of the \textit{distribution function} constitutes one of the fundamental pillars for describing the behavior of a real random variable $X$. The function
$$
F_X(x) = \mathbb{P}\big(X^{-1}((-\infty, x])\big),
$$
encodes all the probabilistic information about $X$: its law, its moments, the possibility of computing expected values, and, more generally, its statistical structure. The distribution function allows one to translate random phenomena from the abstract space $(\Omega, \mathcal{F}, \mathbb{P})$ into the real domain, where measure theory and integration provide a precise framework for their analysis (see, for example, \cite[\textsection 1.2]{Durrett2019}).

Beyond its elementary definition, the existence and properties of $F_X$ can be understood as a profound consequence of a functional--analytic principle: the Riesz--Stieltjes representation theorem. This result establishes a correspondence between positive linear functionals on the space $\mathcal{C}_c(\mathbb{R})$ and finite Borel measures on $\mathbb{R}$, which, in turn, can be represented by right--continuous functions of bounded variation.

From this perspective, the distribution function emerges not merely as a probabilistic object, but as the concrete manifestation of the correspondence between functional analysis and measure theory: every expected value is the evaluation of a linear functional which, by virtue of the Riesz--Stieltjes theorem, admits an integral representation with respect to a bounded variation function. In the probabilistic context, that function is precisely the distribution function.

Let $(\Omega, \mathcal{F}, \mathbb{P})$ be a probability space and let $X:\Omega \to \mathbb{R}$ be a real random variable. Consider the functional $L:  \mathcal{C}_{c}(\mathbb{R}) \to \mathbb{R}$ defined by
$$
L(f) := \mathbb{E}\big(f(X)\big) = \int_{\Omega} f(X(\omega))\, d\mathbb{P}(\omega).
$$

By the properties of the Lebesgue integral, $L$ is linear and positive. Moreover,
$$
|L(f)| \le \int_{\Omega} |f(X(\omega))|\, d\mathbb{P}(\omega) 
\leq \Vert f \Vert_{\infty} \int_{\Omega} 1\, d\mathbb{P}(\omega)
= \Vert f \Vert_{\infty}, \qquad \forall f\in \mathcal{C}_{c}(\mathbb{R}),
$$
which implies that $L$ is continuous on $\mathcal{C}_{c}(\mathbb{R})$. 

Applying the Riesz--Stieltjes representation theorem to $L$ (see Theorem \ref{TeoRSenR}), we obtain a right-continuous nondecreasing function $\alpha:\mathbb{R}\to\mathbb{R}$ such that
\begin{equation}\label{eq:RS-repr}
L(f) = \int_{\mathbb{R}} f(t)\,d\lambda_\alpha(t), \qquad \forall f\in \mathcal{C}_c(\mathbb{R}),
\end{equation}
where $\lambda_{\alpha}$ denotes its associated Lebesgue--Stieltjes measure.

For each $j \in \mathbb{N}$, consider a continuous function $\psi_j:\mathbb{R} \to \mathbb{R}$ satisfying $0\le \psi_j\le 1$, $\psi_j=1$ on $[-j,j]$, and $\psi_j=0$ on $\mathbb{R}\smallsetminus [-(j+1),j+1]$ (see Fig. \ref{Graph--psi}).

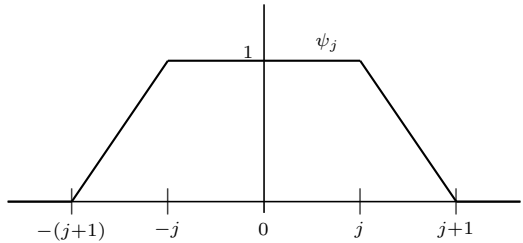
\begin{figure}[!ht]
    \centering
    \begin{tikzpicture}[xscale=0.85,yscale=0.75]
	\draw[-, semithick] (-4,0)--(4,0);
    \draw[-, semithick] (0,-0.2)--(0,3.5);
	\draw (0,-0.22) node[below]{$_{0}$};
	\draw (1.5,0) node{$|$}; \draw (1.5,-0.2) node[below]{$_{j}$};
    \draw (-1.5,0) node{$|$}; \draw (-1.5,-0.2) node[below]{$_{-j}$};
	\draw (3,0) node{$|$}; \draw (3,-0.2) node[below]{$_{j+1}$};
    \draw (-3,0) node{$|$}; \draw (-3,-0.2) node[below]{$_{-(j+1)}$};

    \draw[-, thick] (-1.5,2.5)--(1.5,2.5);
    \draw[-, thick] (-1.5,2.5)--(-3,0);
    \draw[-, thick] (1.5,2.5)--(3,0);
    \draw[-, thick] (-3,0)--(-4,0);
    \draw[-, thick] (3,0)--(4,0);

    \draw (1,2.5) node[above]{$_{\psi_{j}}$};
    \draw (0,2.65) node[below,left]{$_{1}$};
\end{tikzpicture}
\caption{Graph of the function $\psi_{j}$}
\label{Graph--psi}
\end{figure}

Hence $(\psi_j)$ is a nondecreasing sequence of elements in $\mathcal{C}_{c}(\mathbb{R})$ which converges pointwise to the constant function $1$ on $\mathbb{R}$. By the monotone convergence theorem (see \cite[Theorem 4.6]{Bartle1995}), we obtain
$$
\begin{aligned}
\lambda_\alpha(\mathbb{R})&= \int_{\mathbb{R}} 1(t)\,d\lambda_\alpha(t)\\
&= \lim_{j\to\infty} \int_{\mathbb{R}} \psi_j(t)\,d\lambda_\alpha(t)\\
&= \lim_{j\to\infty} L(\psi_j)\\
&= \lim_{j\to\infty} \mathbb{E}[\psi_j(X)]\\
&= \mathbb{E}[1] = 1,
\end{aligned}
$$
and therefore $\lambda_\alpha$ is a probability measure.

Let $x\in\mathbb{R}$ be fixed but arbitrary. For each $j\in\mathbb{N}$, define the continuous function $f_j:\mathbb{R} \to \mathbb{R}$ by
$$
f_j(t)=
\begin{cases}
1, & t\le x,\\[3pt]
1-j(t-x), & x\le t\le x+\frac{1}{j},\\[3pt]
0, & t\ge x+\frac{1}{j}.
\end{cases}
$$

Hence $(f_j)$ is a nondecreasing sequence of continuous functions on $\mathbb{R}$ such that $f_j\to \mathbf{1}_{(-\infty,x]}$ pointwise on $\mathbb{R}$ (see Fig. \ref{Graph--fj}).

\begin{figure}[!ht]
    \centering
    \begin{tikzpicture}[xscale=0.85,yscale=0.75]
	\draw[-, semithick] (-4,0)--(4,0);
    \draw[-, semithick] (0,-0.2)--(0,3.5);
	
    \draw (0,-0.22) node[below]{$_{0}$};
	
    \draw (1,0) node{$|$}; \draw (1,-0.2) node[below]{$_{x}$};
    
	\draw (2.5,0) node{$|$}; \draw (2.5,-0.2) node[below]{$_{x+\frac{1}{j}}$};
    
    \draw[-, thick] (-4,2.5)--(1,2.5);
    
    \draw[-, thick] (1,2.5)--(2.5,0);
    \draw[-, thick] (2.5,0)--(4,0);

    \draw (1,2.5) node[above]{$_{f_{j}}$};
    \draw (0,2.65) node[below,left]{$_{1}$};
\end{tikzpicture}
\caption{Graph of the function $f_{j}$}
\label{Graph--fj}
\end{figure}
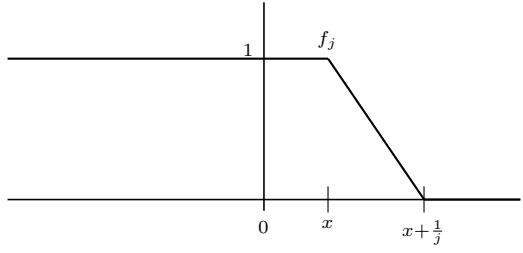

Since $f_j\notin \mathcal{C}_{c}(\mathbb{R})$ for $j \in \mathbb{N}$, we consider a truncation of them. For each $m \in \mathbb{N}$, choose arbitrary $\theta_m\in \mathcal{C}_c(\mathbb{R})$ such that $0\leq \theta_m\leq 1$, $\theta_m = 1$ on $[-m,m]$, and $\theta_m = 0$ on $\mathbb{R}\smallsetminus [-(m+1),m+1]$. Define the function $g_{j,m}:\mathbb{R} \to \mathbb{R}$ by $g_{j,m}:=f_j\,\theta_m$.  
Hence $g_{j,m} \in \mathcal{C}_{c}(\mathbb{R})$ for all $j,m \in \mathbb{N}$ (see for example Fig. \ref{Truncation--fj}).

\begin{figure}[!ht]
    \centering
    \begin{tikzpicture}[xscale=1,yscale=0.75]
	\draw[-, semithick] (-4,0)--(4,0);
    \draw[-, semithick] (0,-0.2)--(0,3.5);
	\draw (0,-0.22) node[below]{$_{0}$};
	\draw (1.5,0) node{$|$}; \draw (1.5,-0.2) node[below]{$_{m}$};
    \draw (-1.5,0) node{$|$}; \draw (-1.5,-0.2) node[below]{$_m$};
	\draw (3,0) node{$|$}; \draw (3,-0.2) node[below]{$_{m+1}$};
    \draw (-3,0) node{$|$}; \draw (-3,-0.2) node[below]{$_{-(m+1)}$};

    \draw[-, thick] (-1.5,2.5)--(1.5,2.5);
    \draw[-, thick] (-1.5,2.5)--(-3,0);
    \draw[-, thick] (1.5,2.5)--(3,0);
    \draw[-, thick] (-3,0)--(-4,0);
    \draw[-, thick] (3,0)--(4,0);

    \draw (1,2.5) node[above]{$_{\theta_{m}}$};
    \draw (0,2.65) node[below,left]{$_{1}$};
     \draw (0.75,-0.2) node[below]{$_{x}$};
     \draw (0.75,0) node{$_{|}$};

     \draw (1.125,-0.2) node[below]{$_{x+\frac{1}{j}}$};
     \draw (1.125,0) node{$_{|}$};

     \draw[-, thick, blue] (-4,2.5)--(0.75,2.5);
     \draw[-, thick, blue] (0.75,2.5)--(1.125,0);
     \draw[-, thick, blue] (1.125,0)--(4,0);
\end{tikzpicture}
\caption{Truncation of $f_{j}$}
\label{Truncation--fj}
\end{figure}
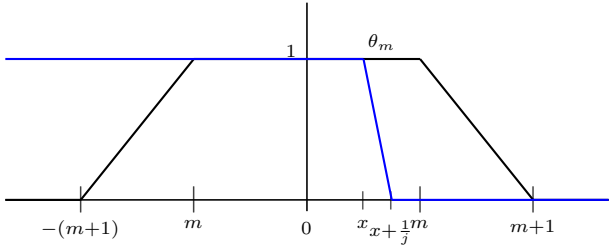

For fixed $j \in \mathbb{N}$, we have $g_{j,m}\to f_j$ pointwise on $\mathbb{R}$ as $m\to\infty$. Applying \eqref{eq:RS-repr} to $g_{j,m}$ and taking the limit as $m\to\infty$, by the monotone convergence theorem we obtain
$$
\mathbb{E}(f_j(X)) = \int_{\mathbb{R}} f_j(t)\, d\lambda_\alpha(t).
$$

Now, taking the limit as $j\to\infty$ and using the monotone convergence theorem once again, we get
$$
\mathbb{E}\big(\mathbf{1}_{X^{-1}(-\infty,x]}\big)
= \int_{\mathbb{R}} \mathbf{1}_{(-\infty,x]}(t)\, d\lambda_\alpha(t).
$$

Consequently,
$$
\lambda_\alpha((-\infty,x]) = \mathbb{P}(X^{-1}((-\infty,x])).
$$

Since the family $\{(-\infty,x]:x\in\mathbb{R}\}$ is a $\pi$-system that generates $\mathcal{B}(\mathbb{R})$, Dynkin’s theorem (see \cite[Theorem A.1.5]{Durrett2019}) implies that 
\begin{equation}\label{eq:lambda-ley}
\lambda_\alpha(B) = \mathbb{P}(X^{-1}(B)), \qquad \forall B\in\mathcal{B}(\mathbb{R}).
\end{equation}

Since $\alpha:\mathbb{R} \to \mathbb{R}$ is right-continuous and nondecreasing, the one-sided limits
$$
c_- := \lim_{x\to-\infty}\alpha(x), \qquad 
c_+ := \lim_{x\to+\infty}\alpha(x),
$$
exist, and $\lambda_\alpha((-\infty,x]) = \alpha(x) - c_-$. It follows that, for any $x \in \mathbb{R}$,
$$
\alpha(x) - c_- = \mathbb{P}(X \leq x).
$$

Defining $F:\mathbb{R} \to \mathbb{R}$ by $F(x) := \alpha(x) - c_-$, we see that $F$ is right-continuous, nondecreasing, and satisfies
$$
F(x) = \mathbb{P}(X^{-1}((-\infty,x])), \qquad
\lim_{x\to-\infty}F(x) = 0, \qquad
\lim_{x\to+\infty}F(x) = \lambda_\alpha(\mathbb{R}) = 1.
$$

Finally, using \eqref{eq:RS-repr} and the fact that $\lambda_\alpha = \lambda_F$, we conclude that
$$
\mathbb{E}\big(f(X)\big) 
= \int_{\mathbb{R}} f(t)\, d\lambda_\alpha(t)
= \int_{\mathbb{R}} f(x)\, dF(x),
\qquad \forall f\in \mathcal{C}_c(\mathbb{R}).
$$

Hence $F:\mathbb{R} \to \mathbb{R}$ is the distribution function of the random variable $X$.

The Riesz--Stieltjes theorem thus shows that the distribution function of a random variable arises naturally within the framework of representation theorems.  

Identifying the expectation functional with a Lebesgue--Stieltjes integral, we see that every probability Borel measure on $\mathbb{R}$ can be described by a right-continuous, nondecreasing function whose variation reproduces exactly the cumulative probability.

\subsection{The Riesz theorem in \texorpdfstring{$L^{p}$}{}: conditional expectation}

Conditional expectation is one of the fundamental notions of modern probability, as it describes expectations when the available information is restricted to a sub-$\sigma$-algebra (see \cite[Chapter 4]{Durrett2019}). Beyond its probabilistic interpretation, this concept possesses a deeply analytic structure that can be understood within the general framework of $L^{p}$ spaces.

In this section, we show that the existence and uniqueness of conditional expectation follow directly from the Riesz representation theorem in $L^{p}$. This theorem, which identifies continuous linear functionals with integrals with respect to elements of the dual space $L^{q}$, provides the necessary tools to formulate conditional expectation as the unique function in $L^{p}$ that represents the same functional originally defined in $L^{q}$.

Thus, the construction of conditional expectation requires neither the Radon--Nikodým theorem nor geometric arguments in $L^{2}$, but arises naturally from the structural duality between $L^{p}$ and $L^{q}$; cf. \cite{ReynaSandoval2025}. This perspective reveals its genuine functional nature and situates conditionality within the same representation principle from which integration originates, highlighting the conceptual unity between the two processes.

\begin{definition}
    Let $(\Omega,\mathcal{F},\mathbb{P})$ be a probability space, $\mathcal{G}$ a sub-$\sigma$-algebra of $\mathcal{F}$ and a random variable $X\in L^{1}(\Omega,\mathcal{F},\mathbb{P})$. The \textbf{conditional expectation} of $X$ given $\mathcal{G}$ is the unique function $\mathbb{E}(X \mid \mathcal{G}) \in L^{1}(\Omega,\mathcal{G},\mathbb{P})$ such that
\begin{equation}
    \int_{\Omega} X\,\mathbf{1}_{A}\,d\mathbb{P}=\int_{\Omega} \mathbb{E}(X \mid \mathcal{G})\,\mathbf{1}_{A}\,d\mathbb{P} \qquad \forall \,A \in\mathcal{G}.
\end{equation}
\end{definition}

We begin by establishing the concept of conditional expectation for random variables in $L^{p}(\Omega,\mathcal{F},\mathbb{P})$, and then show that this result extends naturally to the case $L^{1}(\Omega,\mathcal{F},\mathbb{P})$ (see \cite[Lemma 23.1]{JacodProtter2004}).

Let $\mathcal{G}$ be a sub-$\sigma$-algebra of $\mathcal{F}$, and let $p,q\in (1,\infty)$ be such that $\frac{1}{p}+\frac{1}{q}=1$. Suppose $X \in L^{p}(\Omega,\mathcal{F},\mathbb{P})$. 

Consider the functional $L : L^{q}(\Omega,\mathcal{G},\mathbb{P}) \to \mathbb{R}$ defined by
$$
L(Y) := \int_{\Omega} X\,Y\, d\mathbb{P},
$$
which is clearly linear. By Hölder’s inequality (see \cite[Theorem 23.10]{JacodProtter2004}) we have,
$$
|L(Y)| \leq \int_{\Omega} |X\,Y|\, d\mathbb{P} 
\leq \|X\|_{L^{p}(\Omega)}\, \|Y\|_{L^{q}(\Omega)}, 
\qquad \forall\, Y \in L^{q}(\Omega,\mathcal{G},\mathbb{P}),
$$
and hence $L$ is continuous on $L^{q}(\Omega,\mathcal{G},\mathbb{P})$.

By the Riesz representation theorem in $L^{p}$ (see Theorem \ref{TeoRieszLp}), there exists a unique $\xi \in L^{p}(\Omega,\mathcal{G},\mathbb{P})$ such that
$$
L(Y) = \int_{\Omega} \xi\, Y\, d\mathbb{P},
\qquad \forall\, Y \in L^{q}(\Omega,\mathcal{G},\mathbb{P}).
$$

In particular,
\begin{equation} \label{For:LpEsperanzacondicionalIde}
\int_{\Omega} X\,\mathbf{1}_{A} \, d\mathbb{P}
= \int_{\Omega} \xi\,\mathbf{1}_{A} \, d\mathbb{P},
\qquad \forall\, A \in \mathcal{G}.
\end{equation}

Thus, we identify $\xi$ with the conditional expectation of $X$ given $\mathcal{G}$, that is, $\xi = \mathbb{E}(X \mid \mathcal{G})$ in $L^{p}(\Omega,\mathcal{G},\mathbb{P})$.

This construction preserves positivity: if $X \geq 0$, then $\mathbb{E}(X \mid \mathcal{G}) \geq 0$ almost surely on $\Omega$. Indeed, let 
$$
A(\xi) := \{\omega \in \Omega : \mathbb{E}(X \mid \mathcal{G})(\omega) < 0\} \in \mathcal{G}.
$$

Since $\mathbf{1}_{A(\xi)} \geq 0$, it follows that $X\,\mathbf{1}_{A(\xi)} \ge 0$, and by \eqref{For:LpEsperanzacondicionalIde},
$$
\mathbb{E}\!\left(\mathbb{E}(X \mid \mathcal{G})\,\mathbf{1}_{A(\xi)}\right)
= \mathbb{E}(X\,\mathbf{1}_{A(\xi)}) \geq 0.
$$

However, if $\mathbb{P}(A(\xi)) > 0$, the integrand is strictly negative on $A(\xi)$, which contradicts the previous equality. Hence $\mathbb{P}(A(\xi)) = 0$, and therefore $\mathbb{E}(X \mid \mathcal{G}) \geq 0$ almost surely on $\Omega$.

As an immediate consequence, if $X_1, X_2 \in L^{p}(\Omega,\mathcal{F},\mathbb{P})$ satisfy $X_1 \leq X_2$, then 
$$
\mathbb{E}(X_1 \mid \mathcal{G}) \leq \mathbb{E}(X_2 \mid \mathcal{G})
$$
almost surely on $\Omega$.

This establishes the result in the $L^{p}(\Omega,\mathcal{F},\mathbb{P})$ case. We now extend the construction to the general case $L^{1}(\Omega,\mathcal{F},\mathbb{P})$, showing that the same argument can be obtained as a limit of monotone approximations (see \cite[Lemma 23.1]{JacodProtter2004}).

\begin{theorem}
Let $\mathcal{G}$ be a sub-$\sigma$-algebra of $\mathcal{F}$. For any random variable $X \in L^{1}(\Omega,\mathcal{F},\mathbb{P})$ there is a unique 
$\xi \in L^{1}(\Omega,\mathcal{G},\mathbb{P})$ such that
$$
\int_{A} X\, d\mathbb{P} = \int_{A} \xi\, d\mathbb{P}, 
\qquad \forall\, A \in \mathcal{G}.
$$

In consequence, the conditional expectation $\mathbb{E}(X \mid \mathcal{G})$ exists.
\end{theorem}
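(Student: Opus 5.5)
We extend the $L^{p}$ construction above to $L^{1}$ by monotone approximation, proving uniqueness separately.

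\textbf{Uniqueness.} Suppose $\xi_1,\xi_2\in L^{1}(\Omega,\mathcal{G},\mathbb{P})$ both satisfy the identity. Then $\int_A(\xi_1-\xi_2)\,d\mathbb{P}=0$ for all $A\in\mathcal{G}$. Take $A=\{\xi_1>\xi_2\}\in\mathcal{G}$; the integrand is strictly positive on $A$, so $\mathbb{P}(A)=0$. By symmetry $\xi_1=\xi_2$ almost surely. This is the same argument used above for positivity.

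\textbf{Existence for $X\ge 0$.} Set $X_n:=\min(X,n)$. Each $X_n$ is bounded, hence lies in $L^{p}(\Omega,\mathcal{F},\mathbb{P})$ for a fixed $p\in(1,\infty)$ (say $p=2$), since $\mathbb{P}$ is finite. The $L^{p}$ case gives $\xi_n=\mathbb{E}(X_n\mid\mathcal{G})\in L^{p}(\Omega,\mathcal{G},\mathbb{P})$ with $\int_A X_n\,d\mathbb{P}=\int_A\xi_n\,d\mathbb{P}$ for all $A\in\mathcal{G}$. Because $0\le X_n\le X_{n+1}$, the monotonicity established above gives $0\le\xi_n\le\xi_{n+1}$ almost surely. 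After modifying each $\xi_n$ on a $\mathcal{G}$-null set (a countable union of null sets), we may assume the sequence is nonnegative and nondecreasing everywhere.

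Define $\xi:=\lim_n\xi_n$, which is $\mathcal{G}$-measurable with values in $[0,\infty]$. Applying the monotone convergence theorem to both sides of the identity gives
$$\int_A X\,d\mathbb{P}=\lim_n\int_A X_n\,d\mathbb{P}=\lim_n\int_A\xi_n\,d\mathbb{P}=\int_A\xi\,d\mathbb{P}\qquad\forall A\in\mathcal{G}.$$
Taking $A=\Omega$ yields $\int\xi\,d\mathbb{P}=\int X\,d\mathbb{P}<\infty$. Hence $\xi$ is finite almost surely and, after redefining it to be $0$ on a null set, $\xi\in L^{1}(\Omega,\mathcal{G},\mathbb{P})$.

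\textbf{General $X\in L^{1}$.} Write $X=X^{+}-X^{-}$ and let $\xi^{\pm}$ be the functions obtained for $X^{\pm}$ in the previous step. By linearity of the integral, $\xi:=\xi^{+}-\xi^{-}\in L^{1}(\Omega,\mathcal{G},\mathbb{P})$ satisfies the required identity. Combined with uniqueness, this shows that $\mathbb{E}(X\mid\mathcal{G})$ exists and is well defined in the sense of the Definition.

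\textbf{Main obstacle.} The delicate step is the passage to the limit. Monotonicity of the $\xi_n$ holds only almost surely, so it must be arranged on a common null set before monotone convergence applies. Integrability of the limit $\xi$ then has to be deduced from the identity with $A=\Omega$, since it is not automatic.
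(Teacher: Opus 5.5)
Your proof is correct and follows the same route as the paper: truncate with $\min(X,n)$, apply the $L^{p}$ case obtained from the Riesz theorem, use monotonicity and the monotone convergence theorem, then decompose $X=X^{+}-X^{-}$. You also give the uniqueness argument, handle the common null set, and check integrability of the limit by taking $A=\Omega$; the paper's proof leaves all three implicit.
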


\begin{proof}
Let $X \in L^{1}(\Omega,\mathcal{F},\mathbb{P})$ and suppose that $X \geq 0$ on $\Omega$.  

For each $j \in \mathbb{N}$, define the random variable $X_{j}:\Omega \to \mathbb{R}$ by
$$
X_j(\omega) := \min\{X(\omega),\, j\}.
$$

Clearly $X_j \in L^{p}(\Omega,\mathcal{F},\mathbb{P})$ since $|X_j|^p \leq j^p$, and $X_{j} \leq X_{j+1}$ for each $j \in \mathbb{N}$ (see Fig. \ref{Truncation--X}).

Thus, $(X_j)$ is a nondecreasing sequence in $L^{p}(\Omega,\mathcal{G},\mathbb{P})$ such that $X_j \to X$ pointwise on $\Omega$. Indeed, let $\omega \in \Omega$. Since $X(\omega) \geq 0$, there exists $j(\omega) \in \mathbb{N}$ such that $X(\omega) < j(\omega)$, and consequently $X_{j}(\omega) = X(\omega)$ for all $j \geq j(\omega)$.

\begin{figure}[!ht]
\centering
\begin{tikzpicture}[scale=0.85]
\draw[-, semithick] (0,0) -- (4.2,0);
\draw[-, semithick] (0,0) -- (0,3.2);

\draw[domain=0:3, smooth, variable=\x, thick] plot ({\x}, {0.3*\x*\x+0.5});
\node[right, semithick] at (2.8,2.6) {$X$};

\draw[domain=0:1.5, smooth, variable=\x, blue, thick] plot ({\x}, {0.3*\x*\x+0.5});

\draw[dashed, thick] (0,1.18) -- (1.5,1.18);
\draw[blue, thick] (1.5,1.18) -- (3.8,1.18);

\draw[dashed, semithick] (1.5,0) -- (1.5,1.18);
\node[left] at (0,1.18) {$j$};

\node[above] at (3.5,1.15) {$X_{j}$};

\end{tikzpicture}
\caption{Truncation of $X$ in the level $j$.}
\label{Truncation--X}
\end{figure}
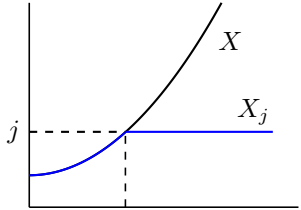

By virtue of the previous result in $L^{p}$, for each $j \in \mathbb{N}$ there exists a unique $\xi_j \in L^{p}(\Omega,\mathcal{G},\mathbb{P})$ satisfying
$$
\int_{\Omega} \xi_j\, \mathbf{1}_A\, d\mathbb{P} 
= \int_{\Omega} X_j\, \mathbf{1}_A\, d\mathbb{P},
\qquad \forall\, A \in \mathcal{G}.
$$

Since $X_j \leq X_{j+1}$, the monotonicity of conditional expectation in $L^{p}$ implies that $\xi_j \leq \xi_{j+1}$ almost surely on $\Omega$. Define
$$
\xi(\omega) := \lim_{j \to \infty} \xi_j(\omega), \qquad \omega \in \Omega,
$$
which is $\mathcal{G}$-measurable because it is the pointwise limit of $\mathcal{G}$-measurable functions.  

Applying the monotone convergence theorem (see \cite[Theorem 9.1]{JacodProtter2004}), we obtain that for each $A \in \mathcal{G}$,
$$
\int_{\Omega} \xi\, \mathbf{1}_A\, d\mathbb{P}
= \lim_{j \to \infty} \int_{\Omega} \xi_j\, \mathbf{1}_A\, d\mathbb{P}
= \lim_{j \to \infty} \int_{\Omega} X_j\, \mathbf{1}_A\, d\mathbb{P}
= \int_{\Omega} X\, \mathbf{1}_A\, d\mathbb{P}.
$$

Hence $\xi = \mathbb{E}(X \mid \mathcal{G})$ for every $X \geq 0$.

For a general random variable $X \in L^1(\Omega,\mathcal{F},\mathbb{P})$, write $X = X^{+} - X^{-}$ with $X^{+}, X^{-} \in L^{1}(\Omega,\mathcal{F},\mathbb{P})$ and $X^{+}, X^{-} \geq 0$. Applying the previous argument to each function, we define
$$
\mathbb{E}(X \mid \mathcal{G}) 
:= \mathbb{E}(X^{+} \mid \mathcal{G}) - \mathbb{E}(X^{-} \mid \mathcal{G}),
$$
which completes the proof.
\end{proof}

In conclusion, formulating conditional expectation through the Riesz representation theorem in $L^{p}$ reveals its genuine analytic nature. 

Rather than an average defined case by case, $\mathbb{E}(X\mid \mathcal{G})$ is the unique element of $L^{p}(\Omega,\mathcal{G},\mathbb{P})$ that reproduces, on the subspace of $\mathcal{G}$-measurable functions, the linear action of $X$ through integration.

\subsection{The Riesz--Markov--Kakutani Theorem: Construction of the Wiener Measure}

The Wiener process, also known as Brownian motion, occupies a central place in the theory of stochastic processes and in the development of stochastic integration. Besides being one of the first stochastic processes to be studied rigorously, it stands out for its structural simplicity and for possessing properties that make it the ideal model for describing continuous random behavior. In fact, the definition of the Itô integral attains its most natural and intuitive form precisely when the integrator is a Wiener process.

From a mathematical point of view, a stochastic process can be completely described through its probability distribution, that is, by a measure assigning probabilities to sets of possible trajectories. In the case of the Wiener process, this law is known as the Wiener measure. This measure acts not on real numbers but on functions: each trajectory is a continuous function of time.

Constructing this measure rigorously requires endowing the space of trajectories with a probabilistic structure consistent with the Gaussian dynamics that characterize the Wiener process. However, this goal cannot be achieved by the elementary methods of finite-dimensional measure theory, since the space of continuous trajectories is not Euclidean. This is precisely where the Riesz--Markov--Kakutani theorem plays an essential role. The theorem establishes a correspondence between continuous linear functionals defined on the space of continuous functions $\mathcal{C}_{c}(\Omega)$ and regular Borel measures on the same space $\Omega$. In other words, constructing a measure amounts to defining a positive and continuous linear functional. Applied to our context, this principle provides a functional approach to obtain the Wiener measure: instead of postulating its existence, we deduce it as the measure representing ---in the sense of Riesz--Markov--Kakutani--- a linear functional built from Gaussian kernels satisfying Kolmogorov’s compatibility equation; cf. \cite[Chapter 6]{Takhtajan2008}.

In this way, the Wiener measure appears as a natural bridge between probability theory and functional analysis. The language of linear functionals not only guarantees its existence and uniqueness; it also provides a precise conceptual interpretation: the Wiener measure represents the distribution of all possible trajectories of a continuous Gaussian process.

We start from the Gaussian kernel
$$
\varphi(x-y,t) = \frac{1}{\sqrt{4\pi D t}}\, 
\exp\!\left(-\frac{|x-y|^{2}}{4 D t}\right),
$$
which is the fundamental solution of the heat equation and describes the transition probabilities of a Gaussian process in $\mathbb{R}^{n}$ (see, for example, \cite{Gorostiza2001,DaPrato1999}). This kernel satisfies Kolmogorov’s compatibility equation (see \cite[Chapter 6]{Takhtajan2008})
$$
\int_{\mathbb{R}}\varphi(x-y,t-s)\,\varphi(y-z,s-u)\,dy 
= \varphi(x-z,t-u),
$$
which expresses the temporal consistency of the transition distributions. In other words, the probability of moving from $x$ to $z$ in time $t-u$ can be decomposed as the integral over all intermediate states $y$, reflecting the Markov property of the Wiener process; cf. \cite[Chapter 5]{DaPrato1999}.

The product of these Gaussian measures in $\mathbb{R}^{n}$ yields a measure such that if $\{V_{i}\}_{i=1}^{n}$ is a collection of Borel sets and $\{t_{i}\}_{i=1}^{n}\subset [0,t]$ is a temporal partition, then the measure of
$$
V := 
\big\{
(x_{1},\ldots,x_{n}) : x_{i} \in V_{i}\ \text{for all}\ i
\big\}
$$
is given by
$$
\int_{V_{1}}\cdots \int_{V_{n}}
\prod_{i=1}^{n}
\varphi(x_{i}-x_{i-1},t_{i}-t_{i-1})\;dx_{1}\cdots dx_{n},
$$
where $x_{0}=x$, $x_{n}=y$, $t_{0}=0$ and $t_{n}=t$. This expression represents the joint probability that the events $V_{1},\ldots,V_{n}$ occur at times $t_{1},\ldots,t_{n}$, respectively, under the Gaussian dynamics governed by $\varphi$.

For the Wiener process, however, the families of sets and times must be arbitrarily large. Hence, the Wiener measure cannot be defined on any finite-dimensional space $\mathbb{R}^{n}$; rather, it must live on an infinite-dimensional space of trajectories.

Let $\Omega_{x,y;t}$ be the space of (possibly discontinuous) trajectories joining $x$ and $y$ in the interval $[0,t]$. Given Borel sets $\{V_{i}\}_{i=1}^{N}$ and a temporal partition $\{t_{i}\}_{i=1}^{N}\subset [0,t]$, define
$$
\mathcal{C} :=
\{
\gamma \in \Omega_{x,y;t} : \gamma(t_{i}) \in V_{i}\ \text{  for all }\ i=1,\ldots,N
\}.
$$

These sets represent the events in which the trajectory $\gamma$ takes prescribed values at times $t_{1},\ldots,t_{N}$.

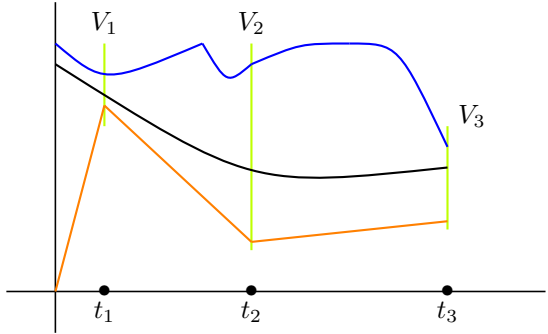
\begin{figure}[!ht]
\centering
\begin{tikzpicture}[xscale=0.65, yscale=0.55]

\draw[-,black, semithick](0,-1) -- (0,7);
\draw[-,black, semithick](-1,0) -- (10,0);

\draw[lime, thick](1,4) -- (1,6);
\draw[lime, thick](4,1) -- (4,6);
\draw[lime, thick](8,1.5) -- (8,4);

\draw (1,0) node{$\bullet$};
\draw (1,-0.5) node{$t_{1}$};
\draw (1,6.5) node{$V_{1}$};

\draw (4,0) node{$\bullet$};
\draw (4,-0.5) node{$t_{2}$};
\draw (4,6.5) node{$V_{2}$};

\draw (8,0) node{$\bullet$};
\draw (8,-0.5) node{$t_{3}$};
\draw (8.5,4.25) node{$V_{3}$};

\draw[orange, thick](0,0) -- (1,4.5);
\draw[orange, thick](1,4.5) -- (4,1.2);
\draw[orange, thick](4,1.2) -- (8,1.7);

\draw[blue, thick](0,6) .. controls (1,5) .. (3,6);
\draw[blue, thick](3,6) .. controls (3.5,5) .. (4,5.5);
\draw[blue, thick](4,5.5) .. controls (5,6) .. (6,6);
\draw[blue, thick](6,6) .. controls (7,6) .. (8,3.5);

\draw[black, thick](0,5.5) .. controls (4,2.5) .. (8,3);

\end{tikzpicture}
\caption{Cylindrical set in the trajectory space $\Omega_{x,y;t}$, with constraints $\gamma(t_i)\in V_i$ and examples of trajectories satisfying them.}
\label{Cyli}
\end{figure}

We want the probability of this event to be
$$
\int_{V_{1}}\cdots \int_{V_{N}}
\prod_{i=1}^{N+1}
\varphi(x_{i}-x_{i-1}, t_{i}-t_{i-1})\;dx_{1}\cdots dx_{N}.
$$

To establish the existence of such a measure via the Riesz--Markov--Kakutani theorem, we must first define a linear functional on a compact Hausdorff space. Thus, we equip the space of trajectories with a topology satisfying these conditions.

Let $\hat{\mathbb{R}} = \mathbb{R}\cup\{\infty\}$ denote the one-point compactification of the real line. For $t>0$, set $\Omega_{t}$ to be the set of all functions $f\colon [0,t]\to \hat{\mathbb{R}}$. Given $x,y\in\mathbb{R}^{n}$, define
$$
\Omega_{x,y;t} := 
\{
\gamma\in\Omega_{t} : \gamma(0)=x,\ \gamma(t)=y
\}.
$$

We endow this space with the product (pointwise convergence) topology $\tau_{\mathrm{prod}}$. By Tychonoff’s theorem, $\Omega_{x,y;t}$ is a compact Hausdorff topological space \cite[Theorem 17.8]{Willard2012}.

Let $\mathcal{B}(\Omega)$ denote the Borel $\sigma$–algebra associated with this topology. Henceforth, the measurable space $\left(\Omega_{x,y;t},\mathcal{B}(\Omega)\right)$ will serve as our underlying domain.

According to the Riesz--Markov--Kakutani theorem (see Theorem \ref{TeoRieszMarkovKakutani}), constructing a regular Borel measure on $\Omega_{x,y;t}$ reduces to defining a positive, bounded linear functional on the space of continuous functions $\mathcal{C}(\Omega_{x,y;t})$. We now introduce the class of functions on which this functional will first be defined.

\begin{definition}
A function $F\colon \Omega_{x,y;t}\to\mathbb{R}$ is called a \textbf{cylindrical function} if $F$ is continuous and there exist a finite set of time points $T=\{0\le t_{1}\le\cdots\le t_{j}\}\subset[0,t]$ and a function $\tilde{F}\colon \hat{\mathbb{R}}^{j}\to\mathbb{R}$ such that
$$
F(\gamma) = \tilde{F}\big(\gamma(t_{1}),\ldots,\gamma(t_{j})\big)
$$
for every $\gamma\in\Omega_{x,y;t}$.
\end{definition}

Denote by $\mathcal{C}_{\rm cyl}(\Omega_{x,y;t})$ the collection of all cylindrical functions. It is a vector subspace of $\mathcal{C}(\Omega_{x,y;t})$, the space of all continuous real-valued functions on $\Omega_{x,y;t}$.

Define the functional $L\colon \mathcal{C}_{\mathrm{cyl}}(\Omega_{x,y;t})\to\mathbb{R}$ by
$$
L(F) =
\int_{\mathbb{R}}\!\cdots\!\int_{\mathbb{R}}
\tilde{F}(x_{1},\ldots,x_{N})
\prod_{i=1}^{N+1}
\varphi(x_{i}-x_{i-1},t_{i}-t_{i-1})
\;dx_{1}\cdots dx_{N},
$$
where $\tilde{F}$ is the function associated with $F$ at the times $(t_{1},\ldots,t_{N})$.

Kolmogorov’s compatibility equation guarantees that $L$ is well defined: if a cylindrical function depends only on $j$ coordinates, extending it artificially to depend on $j+k$ coordinates does not change the value of $L$, since integration against the extra Gaussian kernels collapses exactly to the original expression.

Moreover, since $\varphi$ is nonnegative,
$$
L(1)=\varphi(x-y,t),
$$
and for every $F\in\mathcal{C}_{\mathrm{cyl}}(\Omega_{x,y;t})$,
$$
|L(F)| \le \varphi(x-y,t)\,\|F\|_{\infty}.
$$

Thus, $L$ is a continuous linear functional of norm $\|L\|=\varphi(x-y,t)$.

By the Hahn--Banach theorem \cite[Theorem 1.9.6]{Megginson1998}, $L$ extends to a continuous linear functional on all of $\mathcal{C}(\Omega_{x,y;t})$ without increasing its norm. Hence, we may treat $L$ as an element of the dual space $\mathcal{C}(\Omega_{x,y;t})^{*}$.

Finally, by the Riesz--Markov--Kakutani theorem (see Theorem~\ref{TeoRieszMarkovKakutani}), there exists a unique regular Borel measure $\mathcal{W}^{x,y;t}$ on $\Omega_{x,y;t}$ such that for every cylindrical function $F$,
$$
\begin{aligned}
\int_{\Omega_{x,y;t}} F(\gamma)\, d\mathcal{W}^{x,y;t}(\gamma)
&=\int_{\mathbb{R}}\!\cdots\!\int_{\mathbb{R}}
\tilde{F}(x_{1},\ldots,x_{N})\\
&\qquad\times
\prod_{i=1}^{N+1}
\varphi(x_{i}-x_{i-1},t_{i}-t_{i-1})\,dx_{1}\cdots dx_{N}.
\end{aligned}
$$

We have thus established the following theorem.

\begin{theorem}[Existence of the Wiener Measure]
There exists a unique regular Borel measure $\mathcal{W}^{x,y;t}$ on $\Omega_{x,y;t}$ such that
$$
\begin{aligned}
\int_{\Omega_{x,y;t}} F(\gamma)\, d\mathcal{W}^{x,y;t}(\gamma)
&=\int_{\mathbb{R}}\!\cdots\!\int_{\mathbb{R}}
\tilde{F}(x_{1},\ldots,x_{N})\\
&\qquad\times
\prod_{i=1}^{N+1}
\varphi(x_{i}-x_{i-1},t_{i}-t_{i-1})\,dx_{1}\cdots dx_{N}.
\end{aligned}
$$
for every cylindrical function $F$, where $t_{0}=0$, $t_{N+1}=t$, $x_{0}=x$, and $x_{N+1}=y$.  
The measure $\mathcal{W}^{x,y;t}$ is called the \textbf{Wiener measure} with fixed endpoints $x$ and $y$ at time $t$.
\end{theorem}

This construction not only establishes the existence of the Wiener measure but also provides an effective tool for computing integrals with respect to it. In particular, it allows one to extend the definition of the integral to functions more general than cylindrical ones, while preserving linearity and continuity.

Indeed, let $F$ be a measurable function such that there exists a uniformly bounded sequence of cylindrical functions $(F_{j})_{j\ge1}$ with $F_{j}\to F$ pointwise. By the dominated convergence theorem,
$$
\begin{aligned}
\int_{\Omega_{x,y;t}} F(\gamma)\, d\mathcal{W}^{x,y;t}(\gamma)
&= \lim_{j\to\infty}
\int_{\mathbb{R}}\!\cdots\!\int_{\mathbb{R}}
\tilde{F}_{j}(x_{1},\ldots,x_{N})\\
&\qquad\times
\prod_{i=1}^{N+1}
\varphi(x_{i}-x_{i-1},t_{i}-t_{i-1})\,dx_{1}\cdots dx_{N}.
\end{aligned}
$$

Thus, integrals with respect to the Wiener measure can be approximated by multiple integrals over Euclidean spaces, a feature particularly useful in the study of diffusion processes and in perturbative methods of mathematical physics.

It can also be shown (see, for example, \cite{Nelson1964}) that the set of continuous trajectories $\mathcal{C}_{x,y;t}$ has full measure under $\mathcal{W}^{x,y;t}$, that is,
$$
\mathcal{W}^{x,y;t}(A\cap\mathcal{C}_{x,y;t}) = \mathcal{W}^{x,y;t}(A)
\qquad\text{for all } A\subset\Omega_{x,y;t}.
$$

Consequently, the Wiener measure assigns total probability to continuous trajectories, reflecting the fact that sample paths of the Wiener process are continuous (though nowhere differentiable); see also \cite[Theorem 7.1.6]{Durrett2019}.

This construction highlights the deep connection between functional analysis and probability theory. Starting from a simple linear functional defined on cylindrical functions and relying on the Riesz--Markov--Kakutani representation theorem, one naturally obtains a probability measure on a space of trajectories.

From this perspective, stochastic processes ---and in particular the Wiener process--- appear as manifestations of general principles of duality and continuity in spaces of functions.  
The Wiener measure provides the appropriate analytical framework for the formulation of stochastic integration and for the study of diffusion phenomena, random fluctuations, and stochastic differential equations; cf. \cite{DaPrato1999}.

\bibliographystyle{maa}

\end{document}